\title{Small eigenvalues of the Hodge-Laplacian 
    with sectional curvature bounded below}
\author{\Large Colette Ann\'e and Junya Takahashi}
\date{1 July 2025}
\newtheorem{thm}{Theorem}[section]
\newtheorem{prob}[thm]{Problem}
\newtheorem{lem}[thm]{Lemma}
\newtheorem{rem}[thm]{Remark}
\newtheorem{conj}[thm]{Conjecture}
\renewcommand{\labelenumi}{$(\arabic{enumi})$} 
\numberwithin{equation}{section}
\newcommand{\dint}{\displaystyle \int}
\newcommand{\diam}{\operatorname{diam}}
\newcommand{\Ric}{\operatorname{Ric}}
\newcommand{\supp}{\operatorname{supp}}
\newcommand{\Tub}{\operatorname{Tub}}
\newcommand{\vol}{\operatorname{vol}}
\def\a{\mathop{\mathrm{\alpha}}\nolimits}
\def\e{\mathrm{\varepsilon}}
\def\vphi{\mathop{\mathrm{\varphi}}\nolimits}
\def\N{\mathop{\mathrm{{\Bbb N}}}\nolimits}
\def\R{\mathop{\mathrm{{\Bbb R}}}\nolimits}
\begin{document}

\maketitle

\vspace{-0.2cm}

\begin{center}
 Dedicated to Professor Bruno Colbois on the occasion of his 65th birthday
\end{center}

\vspace{-0.2cm}

\begin{abstract}
 For each degree $p$ and each natural number $k \geq 1$, 
 we construct a one-parameter family of Riemannian metrics on any oriented 
 closed manifold with volume one and the sectional curvature bounded below 
 such that the $k$-th positive eigenvalue of the Hodge-Laplacian acting 
 on differential $p$-forms converges to zero. 
 This result imposes a constraint on the sectional curvature for our previous 
 result in \cite{small-vol[24]}.
\end{abstract}

\footnotetext{$2020$ \it{ Mathematics Subject Classification.} 
 Primary $58J50$; Secondary $35P15,$ $53C21$, $58C40$.
 {\it Key Words and Phrases.} 
 Hodge-Laplacian, differential forms, eigenvalues, surgery, sectional curvature.
 }



 \section{Introduction}

 We study the eigenvalue problems of the Hodge-Laplacian 
 $\Delta = d \delta + \delta d$ acting on $p$-forms 
 on a connected oriented closed Riemannian manifold $(M^m,g)$ of dimension $m \ge 2$.
 The spectrum of the Hodge-Laplacian consists only of non-negative eigenvalues 
 with finite multiplicity.
 We denote its {\bf positive} eigenvalues counted with multiplicity by 
\begin{equation*}
\begin{split}
  \underbrace{0 = \cdots = 0}_{b_p(M)} < {\lambda}^{(p)}_1(M,g) 
    \le {\lambda}^{(p)}_2(M,g) \le \cdots  \le {\lambda}^{(p)}_k(M,g)
    \le \cdots, 
\end{split}
\end{equation*}
 where the multiplicity of the eigenvalue $0$ is equal to the $p$-th Betti number
 $b_p(M)$ of $M$, by the Hodge-Kodaira-de Rham theory.
 In particular, it is independent of a choice of Riemannian metrics.

\vspace{0.2cm}
 In our previous paper \cite[Theorem $1.2$]{small-vol[24]}, 
 for any fixed degree $p$ with $1 \le p \le m-1$,
 we constructed a one-parameter family of Riemannian metrics 
 $\{ \overline{g}_{p,L} \}_{L>1}$ on a connected oriented closed $m$-dimensional 
 manifold $M$ with volume one such that for any natural number $k \ge 1$ 
\begin{equation} \label{eq:thm-small-vol[24]}
\begin{split}
  \lambda^{(p)}_k (M, \overline{g}_{p,L}) &\longrightarrow 0
   \  \text{ as } \  L \longrightarrow \infty.
\end{split}
\end{equation}
 If $M$ is the $m$-dimensional standard sphere ${\mathbb S}^m$, 
 then we can choose such a family of Riemannian metrics to have 
 non-negative sectional curvature (\cite[Theorem $1.1$]{small-vol[24]}). 
 These metrics are also positive Ricci curvature for $m \ge 4$.
 But, for $m=3$ and $p=1$, they are flat on some domain.

 For a general closed manifold $M$, however, the same result cannot hold any longer.
 In fact, there exist some topological obstructions to admit a Riemannian metric 
 on $M$ with non-negative Ricci curvature.
 One of the most famous obstructions is the Bochner theorem: 
 If a closed manifold $M$ admits a Riemannian metric with non-negative Ricci curvature,
 then the first Betti number must hold $b_1(M) \le b_1(T^m) = m.$

 Because of such a topological obstruction, we weaken a curvature constraint of 
 a general closed manifold $M$ from non-negative sectional curvature to the sectional 
 curvature bounded below by a negative constant. 

 In the present paper, for any closed manifold $M$ of dimension $m \ge 2$,
 we construct such a family of Riemannian metrics with the sectional curvature 
 uniformly bounded below.

\begin{thm} \label{thm:main}
 Let $M^m$ be a connected oriented closed manifold of dimension $m \ge 2.$
 For a given degree $p$ with $0 \le p \le m$, a natural number $k$
 and any $\e >0$, there exists a one-parameter family of Riemannian metrics 
 $\{ \overline{g}_{\e,p,k} \}_{\e >0}$ on $M$ with volume one and 
 the sectional curvature uniformly bounded below 
  $K_{\overline{g}_{\e,p,k}} \ge - \kappa$ 
 for some constant $\kappa >0$ such that 
\begin{equation*} 
\begin{split}
  \lambda^{(p)}_k(M, \overline{g}_{\e,p,k}) &\longrightarrow  0 
    \quad  \text{ as } \e \longrightarrow 0.
\end{split}
\end{equation*}
\end{thm}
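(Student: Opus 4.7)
The plan is to refine the construction of the family $\overline{g}_{p,L}$ from \cite{small-vol[24]} by replacing its local building blocks with models that carry a uniform lower sectional curvature bound, while preserving the existence of $k$ linearly independent $p$-forms with Rayleigh quotients tending to zero. First I would localize the modification inside a small geodesic ball $B = B(x_0, r_0) \subset (M, g_0)$; outside $B$ the metric is kept proportional to $g_0$ with a scaling factor tending to $1$ as $\e \to 0$, so the exterior contributes a uniformly bounded sectional curvature automatically.

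Inside $B$ I would glue a chain of $k$ warped-product blocks of the form $\bigl([0,\ell], dr^2\bigr) \times_{f} \Sigma$, where $\Sigma$ is a fixed closed $(m-1)$-manifold chosen so that $b_{p-1}(\Sigma) + b_p(\Sigma) \ge 1$, and $f$ is a warping profile adapted to the curvature constraint. Each block supports a test $p$-form built either by extending a harmonic $(p-1)$-form $\omega$ on $\Sigma$ as $\chi(r)\, dr \wedge \omega$, or by extending a harmonic $p$-form as $\chi(r)\, \omega$, with $\chi$ a plateau function on $[0,\ell]$. A standard computation shows that the Rayleigh quotients are controlled by $1/\ell$ together with the thickness $f$. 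The blocks are joined end-to-end and attached to $M \setminus B$ through smoothed caps; the min--max principle applied to these $k$ test forms then yields $\lambda^{(p)}_k(M, \overline{g}_{\e,p,k}) \to 0$.

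The curvature analysis is the main geometric content. For the warped product above, the sectional curvatures split into mixed planes, with curvature $-f''/f$, and $\Sigma$-tangent planes, with curvature $(K_\Sigma - (f')^2)/f^2$. The bound $K \ge -\kappa$ therefore reduces to the two inequalities $f'' \le \kappa f$ and $K_\Sigma \ge (f')^2 - \kappa f^2$. Both can be arranged, for instance, by taking $\Sigma$ as a product of round spheres of the appropriate dimensions (so $K_\Sigma \ge 0$) together with a hyperbolic warping $f(r) = \d \cosh(\sqrt{\kappa}\, r)$ on the bulk of $[0,\ell]$; this saturates the first inequality and leaves slack in the second whenever $K_\Sigma > 0$. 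Volume compensation is achieved by adjusting $\d$ and the exterior scaling in tandem.

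The hard part, which I expect to dominate the technical work, is the smoothing of the junctions: where two warped blocks meet, where a block meets its cap, and where the plumbing meets the ambient metric $g_0$. At each such junction one must preserve, or only slightly violate, the differential inequalities on $f$ and its second derivative while simultaneously matching the volume bookkeeping and not inflating the Rayleigh quotients of the test forms. This is the classical obstacle of curvature-controlled surgery, here phrased for sectional curvature bounded from below rather than for scalar or Ricci curvature, and it is precisely what forces the constant $\kappa$ to be strictly positive on a general closed $M$, in contrast to the curvature-non-negative result available on $\S^m$ in \cite{small-vol[24]}.
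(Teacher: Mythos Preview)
Your outline has the right analytic ingredients---a hyperbolic warping profile $f=\varepsilon\cosh r$, the curvature formulas $-f''/f$ and $(K_\Sigma-(f')^2)/f^2$, and test forms built from harmonic forms on the cross-section---but there is a genuine gap in the topological setup. You localize the construction inside a geodesic ball $B$, yet for $2\le p\le m-2$ you also require the cross-section $\Sigma$ to satisfy $b_{p-1}(\Sigma)+b_p(\Sigma)\ge 1$, which rules out $\Sigma=\mathbb{S}^{m-1}$. Since $\partial B\cong\mathbb{S}^{m-1}$, a neck $[0,\ell]\times_f\Sigma$ with, say, $\Sigma=\mathbb{S}^p\times\mathbb{S}^{m-p-1}$ cannot be attached to $M\setminus B$ by a mere ``smoothed cap'': one would need a curvature-controlled cobordism from $\Sigma$ to $\mathbb{S}^{m-1}$, which you do not provide and which is essentially the whole difficulty you are trying to avoid.

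The paper sidesteps this by working not in a ball but in a tubular neighborhood $\operatorname{Tub}(\mathbb{S}^p)\cong\mathbb{S}^p\times\mathbb{D}^{m-p}$ of an embedded $p$-sphere with trivial normal bundle; its boundary is already $\mathbb{S}^p\times\mathbb{S}^{m-p-1}$, so no exotic gluing is needed. Moreover the metric there is a genuine \emph{product} $g_{\mathbb{S}^p}\oplus g_{C_{\bar k,\varepsilon}}$ rather than a warped product over all of $\Sigma$: only the normal $\mathbb{D}^{m-p}$ factor is stretched into a chain of $(m{-}p)$-dimensional hyperbolic dumbbells, while the $\mathbb{S}^p$ factor keeps unit size. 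This makes the curvature bound immediate (a Riemannian product inherits the worst lower curvature bound of its factors, and mixed planes are flat) and makes the test forms $\varphi_i=\chi_i(r)\,v_{\mathbb{S}^p}$ co-closed with $|v_{\mathbb{S}^p}|\equiv 1$, so the Rayleigh quotient reduces to a one-variable estimate against $(\varepsilon\cosh r)^{m-p-1}$. One further point you omit: since the min-max is applied to co-closed forms modulo harmonic ones, the paper uses $\bar k=k+b_p(M)$ disjointly supported test forms, not $k$, to guarantee that the $k$-th \emph{positive} eigenvalue is captured.
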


 The construction of this one-parameter family of Riemannian metrics 
 is as follows: 
 We take an embedded $p$-dimensional sphere ${\Bbb S}^p$ into $M$ 
 whose normal bundle is trivial. 
 Then, in a tubular neighborhood of ${\Bbb S}^p$, we change 
 a disk of the normal direction to get longer and thinner, 
 while keeping its sectional curvature uniformly bounded below.

\begin{rem} \label{rem:main-thm}
\begin{enumerate}
  \renewcommand{\labelenumi}{$(\roman{enumi})$}
 \item  The Riemannian metrics $\overline{g}_{\e,p,k}$ in Theorem $\ref{thm:main}$
  depend on the degree $p$ and the number $k$ of the positive eigenvalues. \\[-0.8cm]

 \item  For the Riemannian metrics $\overline{g}_{\e,p,k}$ on $M$ in 
  Theorem $\ref{thm:main}$, from the proof, we find that the diameter 
  $\diam (M, \overline{g}_{\e,p,k}) \longrightarrow \infty$ 
  as $\e \longrightarrow 0.$  \\[-0.8cm]

 \item  For the rough Laplacian $\overline{\Delta} = \nabla^* \nabla$ acting on 
 $p$-forms and tensor fields of any type, the same statement also holds
 (See Remark \ref{rem:small-ev} $(ii)$).
\end{enumerate}
\end{rem}

 The Riemannian metrics $\overline{g}_{\e,p,k}$ in Theorem $\ref{thm:main}$
 depend also on the degree $p$ of differential forms. 
 However, by taking $m-1$ distinct embedded spheres
 ${\Bbb S}^0, {\Bbb S}^1, {\Bbb S}^2, \dots$, $ {\Bbb S}^{m-2}$ in $M$
 (see Lemma \ref{lem:k-ev-estimate}) 
 and applying the same construction in Theorem $\ref{thm:main}$ to each sphere,
 we can obtain a family of Riemannian metrics $\overline{g}_{\e,k}$ on $M$, 
 which are independent of all the degrees $p=0,1,\dots,m$,
 with small eigenvalues for all the degrees $p =0,1,2,\dots,m.$

\begin{thm} \label{thm:uniform-main}
 Let $M^m$ be a connected oriented closed manifold of dimension $m \ge 2.$
 For any $\e >0$ and a natural number $k$, 
 there exists a one-parameter family of Riemannian metrics 
 $\{ \overline{g}_{\e,k} \}_{\e >0}$ on $M$ with volume one and 
 the sectional curvature uniformly bounded below 
 $K_{\overline{g}_{\e,k}} \ge - \kappa$ for some constant $\kappa >0$ 
 such that for any degree $p$ with $0 \le p \le m$ 
\begin{equation*} 
\begin{split}
  \lambda^{(p)}_k(M, \overline{g}_{\e,k}) &\longrightarrow  0   
    \quad  \text{ as } \e \longrightarrow 0.
\end{split}
\end{equation*}
\end{thm}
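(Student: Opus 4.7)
The plan is to run the construction of Theorem~\ref{thm:main} on $m-1$ pairwise disjoint regions in parallel, one per sphere $\mathbb{S}^p$, and to invoke Hodge duality to cover the remaining two degrees. Because the deformation in Theorem~\ref{thm:main} is localized in a tubular neighborhood of an embedded sphere with trivial normal bundle, contributions from different spheres do not interact, so the small-eigenvalue phenomenon for all degrees can be produced simultaneously on a single metric.

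Using Lemma~\ref{lem:k-ev-estimate}, I would first choose $m-1$ pairwise disjoint embedded spheres $\Sigma_p \cong \mathbb{S}^p \hookrightarrow M$ for $p = 0, 1, \dots, m-2$, each with trivial normal bundle; these can all be realized as boundaries of standard disks $\mathbb{D}^{p+1}$ sitting inside $m-1$ pairwise disjoint Euclidean charts of $M$. On disjoint tubular neighborhoods $U_p$ of the $\Sigma_p$, I would apply the deformation from Theorem~\ref{thm:main} with a common small parameter $\varepsilon > 0$ and leave the original metric unchanged outside $\bigcup_p U_p$. Taking the pointwise worst curvature lower bound over the $U_p$'s yields a uniform estimate $K \ge -\kappa$ for the patched metric, and a single bounded scalar rescaling normalizes the total volume to one without destroying either the curvature bound or the smallness of the Rayleigh quotients.

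For each degree $p$ with $0 \le p \le m-2$, Theorem~\ref{thm:main} supplies $k$ test $p$-forms supported in $U_p$ whose Rayleigh quotients tend to zero as $\varepsilon \to 0$; since harmonic $p$-forms span an additional $b_p$-dimensional space orthogonal to these by Hodge--de Rham theory, the min-max principle gives $\lambda^{(p)}_k(M, \overline{g}_{\varepsilon,k}) \to 0$. For the two remaining degrees, Hodge duality $\lambda^{(p)}_k = \lambda^{(m-p)}_k$ applied to the constructions associated with $\Sigma_0$ and $\Sigma_1$ transfers the conclusion to $p = m$ and $p = m-1$ respectively, covering all degrees $0 \le p \le m$.

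The main obstacle is verifying that the $m-1$ local modifications really do patch together into a smooth metric with a lower bound on sectional curvature that is uniform in $\varepsilon$; this should follow from the fact, implicit in Theorem~\ref{thm:main}, that the deformed metric coincides with the background metric near the boundary of its tubular neighborhood, so each local curvature bound $-\kappa_p$ can be taken independently of the choices made for other degrees. A secondary check, namely that the test forms produced by the construction on $U_p$ remain linearly independent of those produced on $U_{p'}$ for $p' \ne p$, is immediate from disjoint supports.
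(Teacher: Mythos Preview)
Your approach coincides with the paper's: the authors state, just before Theorem~\ref{thm:uniform-main}, that one takes $m-1$ disjoint embedded spheres $\mathbb{S}^0,\mathbb{S}^1,\dots,\mathbb{S}^{m-2}$ in $M$ and applies the construction of Theorem~\ref{thm:main} to each, with Hodge duality (Lemma~\ref{lem:basic}(1) and Remark~\ref{rem:duality}) covering the degrees $p=m-1,m$. Your remarks about disjoint tubular neighborhoods, the patched curvature bound, and the harmless final rescaling are exactly the checks the paper leaves implicit.

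One point does need correction. Your claim that the harmonic $p$-forms are orthogonal to the test forms $\varphi_i$ ``by Hodge--de Rham theory'' is not right: the $\varphi_i=\chi_i\, v_{\mathbb{S}^p}$ are co-closed, hence decompose as harmonic plus co-exact, and there is no reason their harmonic component should vanish (nor that this component stays uniformly small as $\varepsilon\to 0$). Consequently, for $v=\varphi+h$ in your proposed $(k+b_p)$-dimensional span one cannot control $\|\varphi\|^2/\|\varphi+h\|^2$ in the Rayleigh quotient. The paper sidesteps this in Lemma~\ref{lem:k-ev-estimate} by attaching $\overline{k}=k+b_p(M)$ dumbbells and producing $\overline{k}$ disjointly supported co-closed test forms, so that the min-max is applied directly to their span without ever invoking the harmonic forms; you should follow that bookkeeping (taking, say, $\overline{k}=k+\max_{0\le p\le m} b_p(M)$ dumbbells on every $\Sigma_p$ so the construction is uniform in $p$).
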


\begin{rem}
 As a consequence of Theorem $\ref{thm:uniform-main}$, 
 we find that there exists no positive lower bound for the positive eigenvalue 
 of the Hodge-Laplacian on $p$-forms for any degree $p$ with $1 \leq p \leq m-1$ 
 depending only on the dimension, the volume and a lower bound of 
 the sectional curvature. 
\end{rem}

 From Remark \ref{rem:main-thm} $(ii)$, it is a natural question to ask the case 
 where the diameter is bounded in addition.
 In this case, it would be expected to exist a positive lower bound for 
 the positive eigenvalues of the Hodge-Laplacian for all the degree $p=0,1, \dots,m.$
 This was conjectured by J.\ Lott \cite[p.918]{Lott[04]-quotient}
 (See Conjecture \ref{conj:Lott}).

\vspace{0.3cm}

 The present paper is organized as follows:
 In Section $2$, we fix notations and recall basic properties of 
 the Hodge-Laplacian.
 In Section $3$, we consider the hyperbolic dumbbell and a connected sum 
 of its $k$ copies.
 In Section $4$, we construct a family of Riemannian metrics on 
 any closed manifold $M$, 
 and in Section $5$, we prove that such Riemannian manifolds have 
 small eigenvalues, which completes the proof of Theorem \ref{thm:main}.
 In Section $6$, we discuss some remarks and further studies.

 \vspace{0.5cm}

\noindent
 {\bf Acknowledgement.}
 The authors would like to thank the referees for helpful comments. 
 The second named author was partially supported by the Grants-in-Aid 
 for Scientific Research (C), Japan Society for the Promotion of Science,
 No.\ 16K05117.


\section{Notations and basic facts}

 We fix the notations used in the present paper.
 We use the same notations as in \cite{small-vol[24]}.
 Let $(M^m,g)$ be a connected oriented closed Riemannian manifold of 
 dimension $m \ge 2.$
 The metric $g$ defines the volume element $d \mu_g$ and the scalar product 
 on the fibers of any tensor bundle. The $L^2$-inner product on the space 
 of all smooth $p$-forms $\Omega^p(M)$ is defined as, 
 for any $p$-forms $\vphi, \psi$ on $M$
\begin{equation*} \label{eq:L^2-metric}
\begin{split}
  ( \vphi, \psi )_{L^2(M,g)} :&= \dint_M \langle \vphi, \psi \rangle d \mu_g
   \quad  \text{ and }  \quad
  \| \vphi \|^2 _{L^2(M,g)} := ( \vphi, \vphi )_{L^2(M,g)}.
\end{split}
\end{equation*}
 The space of $L^2$ $p$-forms $L^2 (\Lambda^p M,g)$ is the completion 
 of $\Omega^p (M)$ with respect to this $L^2$-norm.

\vspace{0.2cm}

 We now recall the basic properties used in the present paper: 

\begin{lem}\label{lem:basic}
$(1)$ The Hodge duality: 
  For all $p=0,1,\dots,m$ and any $k \ge 1$, since $\Delta * = * \Delta$, 
 we have  
\begin{equation*}
\begin{split}
 \lambda^{(m-p)}_k(M,g) &= \lambda^{(p)}_k(M,g).
\end{split}
\end{equation*}
\noindent
$(2)$ The scaling change of metrics: 
 For a positive constant $a >0$ and for all $p=0,1,\dots,m$ and any $k \ge 1$, 
 we have 
\begin{equation*}
\begin{split}
  \lambda^{(p)}_k(M,ag) &= a^{-1} \, \lambda^{(p)}_k(M,g).
\end{split}
\end{equation*}
\noindent
$(3)$ The normalization of the volume: 
 If we set the new Riemannian metric 
\begin{equation} \label{eq:vol-normalization}
\begin{split}
  \overline{g} &:= \vol(M,g)^{- \frac{2}{m}} \, g, 
\end{split}
\end{equation}
 then we have  $\vol(M,\overline{g}) \equiv 1.$
\end{lem}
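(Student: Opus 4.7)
The plan is to prove each of the three parts using standard Hodge-theoretic identities, so the argument is essentially a bookkeeping exercise rather than a deep one.

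For part $(1)$, I would invoke the fact that the Hodge star operator $\ast : \Omega^p(M) \to \Omega^{m-p}(M)$ is a pointwise linear isometry with respect to the induced inner products on $\Lambda^p T^*M$ and $\Lambda^{m-p} T^*M$, and hence an isometry on the $L^2$-level. Combined with the commutation relation $\Delta \ast = \ast \Delta$ given in the statement, $\ast$ restricts to a bijection between the $\lambda$-eigenspace of $\Delta$ on $p$-forms and the $\lambda$-eigenspace on $(m-p)$-forms, for every $\lambda \geq 0$. Listing the positive eigenvalues with multiplicity then forces $\lambda^{(m-p)}_k(M,g) = \lambda^{(p)}_k(M,g)$ for every $k \geq 1$.

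For part $(2)$, I would track how each ingredient of $\Delta = d\delta + \delta d$ transforms under $g \mapsto a g$. The exterior derivative $d$ is metric-independent. On $p$-forms the pointwise inner product scales by $a^{-p}$, while the volume form scales as $d\mu_{ag} = a^{m/2} \, d\mu_g$, so the Hodge star $\ast_{ag} : \Lambda^p \to \Lambda^{m-p}$ equals $a^{m/2 - p} \ast_g$. Using the formula $\delta = (-1)^{m(p+1)+1} \ast d \, \ast$ on $(p+1)$-forms, the two factors of $\ast$ contribute $a^{(m/2 - p - 1) + (p - m/2)} = a^{-1}$, so $\delta_{ag} = a^{-1} \delta_g$ and therefore $\Delta_{ag} = a^{-1} \Delta_g$ on $p$-forms of any degree. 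The eigenvalue identity $\lambda^{(p)}_k(M, ag) = a^{-1} \lambda^{(p)}_k(M, g)$ then follows, either by observing that the eigenspaces of $\Delta_g$ and $\Delta_{ag}$ coincide (with eigenvalues rescaled by $a^{-1}$), or via the min-max principle.

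Part $(3)$ is a one-line computation: with $a = \vol(M,g)^{-2/m}$, the formula $d\mu_{ag} = a^{m/2} \, d\mu_g$ gives $\vol(M, \overline{g}) = \vol(M,g)^{-1} \vol(M,g) = 1.$

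The only step requiring the slightest care is the scaling in $(2)$, where one has to keep track of the two powers of $a$ coming from the Hodge stars; none of the three parts presents a genuine obstacle, and no geometric input beyond the definitions is needed.
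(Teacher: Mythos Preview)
Your argument is correct in all three parts; the scaling computation in $(2)$ is carried out carefully and the powers of $a$ are tracked correctly. The paper itself does not supply a proof of this lemma at all: it is stated as a list of standard facts (with the reference to \cite{small-vol[24]} for notation) and immediately used, so your write-up actually provides more detail than the original.
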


\vspace{0.2cm}
 In particular, from the properties $(2)$ and $(3)$, we have 
\begin{equation} \label{eq:ev-normalization}
\begin{split}
  \lambda^{(p)}_k(M, \overline{g}) 
   &= \vol(M,g)^{\frac{2}{m}} \, \lambda^{(p)}_k(M,g)
\end{split}
\end{equation}
 for any $p$ and $k$.


\section{The hyperbolic dumbbell and its connected sum}

\subsection{The hyperbolic dumbbell}

 Following Boulanger and Courtois \cite{Boulanger-Courtois[22]}, Section $5$, pp.3626--3628, 
 we recall the $n$-dimensional hyperbolic dumbbell $(C_{\e}, g_{\e})$ 
 with parameter $\e >0$.

 For any $\e >0$, we first consider the $n$-dimensional hyperbolic cylinder 
 $C_{0,\e} := [-L,L] \times {\Bbb S}^{n-1}$ with the Riemannian metric 
\begin{equation} \label{eq:hyp-cylinder}
\begin{split}
   g_{\e} &= dr \oplus \e^2 \cosh^2(r) g_{{\Bbb S}^{n-1}} \quad (\e >0)
\end{split}
\end{equation}
 for $-L \le r \le L$, where $L:= |\log \e|$ ($\e=e^{-L}$) for short 
 and $g_{{\Bbb S}^{n-1}}$ denotes the standard Riemannian metric on 
 the $n-1$ dimensional standard sphere ${\Bbb S}^{n-1}$ of constant curvature one.

 Let $B_1, B_2$ be two $n$-dimensional spheres with the standard metrics 
 from which $n$-dimensional disks are removed.
 We glue $B_1, B_2$ to the boundary of this hyperbolic cylinder $C_{0,\e}$,
 identifying $\partial B_1$ with the left-side boundary $\{- L \} \times {\Bbb S}^{n-1}$
 and $\partial B_2$ with the right-side boundary $\{ L \} \times {\Bbb S}^{n-1}$. 
 It means that the removed disks on $B_1, B_2$ have the radius 
 $\e \cosh(|\log \e|) \rightarrow 1/2$ as $\e \to 0$.
 The resulting manifold is diffeomorphic to ${\Bbb S}^n$.
 We extend the Riemannian metric $g_{\e}$ on the hyperbolic cylinder $C_{0,\e}$
 to the whole Riemannian metric on ${\Bbb S}^n$ which is independent of $\e$ 
 on the both-sides $B_1, B_2$. 
 In addition, we can choose the extended Riemannian metric as the standard sphere 
 metrics on the both-sides $B_1, B_2$ away from their boundaries.
 We also denote by $g_{\e}$ this extended Riemannian metric, and we call 
 the resulting Riemannian manifold the $n$-dimensional hyperbolic dumbbell 
 denoted by $(C_{\e}, g_{\e})$ (see Figure $1$) .

\begin{figure}[H] \label{fig:hyp-dumbbell}
\begin{center}
\begin{tikzpicture}[scale=0.8]
  \draw (0,-0.1) arc [radius=0.1, start angle=-90, end angle=90];
  \draw[dotted] (0,0.1) arc [radius=0.1, start angle=90, end angle=270];
%
  \draw [domain=-3:3] plot (\x,{(cosh(\x))/10});
  \draw [domain=-3:3] plot (\x,{- (cosh(\x))/10});
  \draw[<->] (-3.0,-1.5) -- (3.0,-1.5);
  \draw (0,-1.4) -- (0,-1.6); 
  \draw (0,-1.5)  node[below] {$0$};  
  \draw (3.3,-1.5)  node[below] {$L = |\log \e|$}; 
  \draw (-3.0,-1.5) node[below] {$-L$}; 
  \draw[->] (0,1.0) -- (0,0.3);
  \draw[->] (0,-1.0) -- (0,-0.3);
  \draw (0.3,0.7) node {$\e$};
  \draw (3,-1) arc [radius=sqrt(2), start angle=-135, end angle=135];
  \draw (3,-1) arc [radius=sqrt(2), start angle=-45, end angle=45];
  \draw[dotted] (3,1) arc [radius=sqrt(2), start angle=135, end angle=225];
  \draw (4,0) node {$B_2$};
  \draw (-3,1) arc [radius=sqrt(2), start angle=45, end angle=315];
  \draw (-3,-1) arc [radius=sqrt(2), start angle=-45, end angle=45];
  \draw[dotted] (-3,1) arc [radius=sqrt(2), start angle=135, end angle=225];
  \draw (-4,0) node {$B_1$};
\end{tikzpicture}
 \vspace{-0.5cm}
\end{center}
   \caption{the hyperbolic dumbbell $(C_{\e}, g_{\e})$}
\end{figure}

 We precisely exhibit the way of connecting of $g_{\e}$ and 
 the standard metric of the sphere as follows:
 From the symmetry of the hyperbolic cylinder $C_{0,\e}$, 
 it is enough to consider the connecting part corresponding to
 $r= L = |\log \e|$.

 We introduce the new coordinate $s := r - L = r + \log \e$, then 
\begin{equation} \label{eq:func-f_e}
\begin{split}
  f_{\e}(s) &:= \e \cosh(s+L)
   = \dfrac{1}{2} e^s + \dfrac{\e^2}{2} e^{-s}
\end{split}
\end{equation}
 is the warping function of $g_{\e}$ on $- 2L \le s \le 0$.
 We set 
\begin{equation} \label{eq:func-h}
\begin{split}
   h(s) &:= \sin \big( s + \frac{\pi}{6} \big)
   \quad  (0 \le s \le \frac{\pi}{12}).
\end{split}
\end{equation}
 To connect these two positive functions $f_{\e}(s)$ and $h(s)$ smoothly, 
 we define the new function $F_{\e}(s)$ as follows:
\begin{equation} \label{eq:smooth-connect}
\begin{split}
  F_{\e}(s) &:= \chi(s) f_{\e}(s) + \big( 1 - \chi(s) \big) h(s) 
   \quad  (0 \le s \le \frac{\pi}{12}),
\end{split}
\end{equation}
 where $\chi(s)$ is a smooth cut-off function satisfying 
\begin{equation*} 
\begin{split}
  \chi(s) &= 
\begin{cases}
  \  1   &   (0 \le s \le \frac{\pi}{36}), \\
  \  0   &   (\frac{\pi}{18} \le s \le \frac{\pi}{12}).
\end{cases}
\end{split}
\end{equation*}
 By \eqref{eq:func-f_e} and \eqref{eq:func-f_e}, 
 the equation \eqref{eq:smooth-connect} is written as 
\begin{equation*} 
\begin{split}
  F_{\e}(s) &= \left\{ \chi(s) \dfrac{1}{2} e^s + \big( 1 - \chi(s) \big) h(s) \right\} 
         + \dfrac{\e^2}{2} e^{-s} \chi(s).
\end{split}
\end{equation*}
 If we take $\e$ small enough, the term $\frac{\e^2}{2} e^{-s} \chi(s)$
 and its derivatives are also small enough. 
 Hence, there exists an $\e_0 >0$ such that for all $0< \e < \e_0$, 
 $F_{\e}(s)$, $F^{\prime}_{\e}(s)$ and $F^{\prime \prime}_{\e}(s)$ 
 are uniformly bounded on $0 \le s \le \frac{\pi}{12}$.
 In particular, since $f_{\e}(s)$ and $h(s)$ are monotone increasing, 
 we see 
\begin{equation*} 
\begin{split}
  0.5 &< f_{\e}(0) \le f_{\e}(s) \le f_{\e}(\tfrac{\pi}{12}) < e^{\pi/12} < 0.7, \\
  0.5 &= \sin ( \tfrac{\pi}{6}) \le h(s) \le \sin ( \tfrac{\pi}{4}) 
    = \tfrac{1}{\sqrt{2}} < 0.8.
\end{split}
\end{equation*}
 Thus, we have
\begin{equation} \label{eq:F_e-bdd}
\begin{split} 
   0.5 &\le F_{\e}(s) < 0.8 
     \quad  (0 \le s \le \tfrac{\pi}{12}).
\end{split}
\end{equation}

 Now, if we take a Riemannian metric around the connecting part as 
\begin{equation} \label{eq:connecting-metric}
\begin{split}
  ds^2 \oplus F^2_{\e}(s) \, g_{{\Bbb S}^{n-1}}  \quad 
      (0 \le s \le \tfrac{\pi}{12}), 
\end{split}
\end{equation}
 then the whole Riemannian metric $g_{\e}$ on the hyperbolic dumbbell $C_{\e}$ 
 is smooth, and coincides with the Riemannian metric on the hyperbolic cylinder $C_{0, \e}$
 and coincides with the standard sphere metric on $B_2$. 
 In fact, since $F_{\e}(s) = f_{\e}(s) = \e \cosh (s)$ for $0 \le s \le \tfrac{\pi}{36}$,
 we have 
\begin{equation*}
\begin{split}
  ds^2 \oplus F^2_{\e}(s) \, g_{{\Bbb S}^{n-1}} 
     &= ds^2 \oplus \e^2 \cosh^2 (s) \, g_{{\Bbb S}^{n-1}} \
   \text{ on } [0, \tfrac{\pi}{36}] \times {\Bbb S}^{n-1},
\end{split}
\end{equation*}
 which coincides with the Riemannian metric on the hyperbolic cylinder.
 Since $F_{\e}(s) = h(s) = \sin (s+ \tfrac{\pi}{6})$ for 
 $\tfrac{\pi}{18} \le s \le \tfrac{\pi}{12}$, we have 
\begin{equation*}
\begin{split}
  ds^2 \oplus F^2_{\e}(s) \, g_{{\Bbb S}^{n-1}} 
   &= ds^2 \oplus \sin^2 (s+ \tfrac{\pi}{6}) \, g_{{\Bbb S}^{n-1}} 
     \ \text{ on } [\tfrac{\pi}{18}, \tfrac{\pi}{12}] \times {\Bbb S}^{n-1},
\end{split}
\end{equation*}
 which coincides with the standard sphere metric on $B_2$.

\begin{lem}[Sectional curvature of warped product manifolds]
  \label{lem:sect-curv-warped-product}
 For a Riemannian manifold $(N,h)$ and a smooth positive function $f(r)$
 on the interval $I$, 
 we consider the warped product manifold 
  $(M, g_f) := (I \times N, dr^2 \oplus f^2(r) h)$. 
 For orthonormal vectors $X$ and $Y$ on $(N,h)$,
 the vectors $\widetilde{X} := f(r)^{-1} X$, $\widetilde{Y} := f(r)^{-1} Y$
 on $M$ are orthonormal and perpendicular to $\partial_r = \frac{\partial}{\partial r}$
 with respect to the metric $g_f$.

 Then, the sectional curvatures $K_M$ of $(M, g_f)$ are given as follows:
\begin{enumerate}
 \renewcommand{\labelenumi}{$(\roman{enumi})$}
 \item  $K_M (\partial_r, \widetilde{X})
     = - \dfrac{f^{\prime \prime}(r) }{f(r)},$  
 \item  $ K_M (\widetilde{X}, \widetilde{Y}) 
     = \dfrac{K_N(X,Y) - ( f^{\prime}(r))^2 }{f^2(r)}.$
\end{enumerate}
 In particular, if $(N^n,h) = ({\Bbb S}^n, g_{{\Bbb S}^n})$, then $K_N(X,Y) \equiv 1.$
\end{lem}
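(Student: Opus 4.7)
The plan is to derive both sectional curvature formulas from the standard Levi-Civita connection identities for a warped product $I \times_f N$. First, I would lift an orthonormal frame of $(N,h)$ to vector fields on $M$ that are constant along $r$; call these lifts $U, V$ (so $U = X$, $V = Y$ as fields on $M$, which are horizontal with respect to the projection and satisfy $[\der_r, U]=0$). A direct application of the Koszul formula to $g_f = dr^2 \oplus f^2 h$ yields the warped product connection identities:
\begin{equation*}
\nabla_{\der_r} \der_r = 0, \quad
\nabla_{\der_r} U = \nabla_U \der_r = \frac{f'(r)}{f(r)}\, U, \quad
\nabla_U V = \nabla^N_U V - f(r) f'(r)\, h(U,V)\, \der_r,
\end{equation*}
where $\nabla^N$ is the Levi-Civita connection of $(N,h)$. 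These are the only inputs needed.

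Next, to obtain (i), I would compute
\begin{equation*}
R(\der_r, U) U = \nabla_{\der_r} \nabla_U U - \nabla_U \nabla_{\der_r} U - \nabla_{[\der_r,U]} U,
\end{equation*}
plugging in the formulas above with $|U|_h = 1$. The horizontal part of $\nabla_{\der_r}\nabla_U U$ produces a $-(f f')' \der_r = -(f'^2 + f f'') \der_r$ contribution, while $\nabla_U \nabla_{\der_r}U = \nabla_U\bigl((f'/f)U\bigr)$ produces a $-f'^2 \der_r$ contribution that cancels the $-f'^2$ piece. Taking $g_f\bigl(R(\der_r, U)U, \der_r\bigr)$ and normalizing the vertical vector to $\widetilde X = U/f$ yields $K_M(\der_r, \widetilde X) = -f''/f$, which is formula (i).

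For (ii), with both $U$ and $V$ lifted horizontal (orthonormal in $h$), I would expand $R(U,V)V$ using the third identity twice. The $\nabla^N$-part reproduces the curvature of $N$, contributing $R^N(U,V)V$; the correction terms involving $f f' h(\cdot,\cdot) \der_r$ produce, after using $\nabla_{\der_r} V = (f'/f)V$, an extra $-f'^2 U$ in the horizontal direction. Taking the inner product with $U$ and normalizing both vectors by $1/f$ gives $K_M(\widetilde X, \widetilde Y) = \bigl(K_N(X,Y) - f'(r)^2\bigr)/f(r)^2$, which is formula (ii). The final sentence then follows from $K_{\S^n} \equiv 1$.

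The computation is routine bookkeeping; the main pitfall to watch for is keeping track of the two different metrics when normalizing (vectors that are $h$-unit have $g_f$-length $f$), and not confusing $\nabla^N_U V$ with $\nabla_U V$ when extracting vertical versus horizontal components. One could alternatively cite O'Neill's warped product formulas directly, but the direct Koszul computation above is short enough to carry out from scratch.
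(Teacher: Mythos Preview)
Your computation is correct: the Koszul-derived connection identities you list are the standard ones for a one-dimensional base warped product, and the curvature bookkeeping you outline yields the stated formulas (i) and (ii) without issue. The one place worth tightening is the bracket term in (ii): you should note that $[U,V]$ is again a lift from $N$, so that $\nabla_{[U,V]}V$ contributes only the $\nabla^N_{[U,V]}V$ piece needed to assemble $R^N(U,V)V$; you implicitly use this but do not say it.

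As for comparison with the paper: there is nothing to compare. The paper does not prove this lemma at all---it simply refers the reader to Petersen's textbook (Section 4.2.3). Your direct Koszul computation is exactly the kind of argument one finds in such references (or in O'Neill's original treatment), so you are supplying what the authors chose to outsource. In that sense your write-up is more self-contained than the paper itself, at the cost of a page of routine calculation that the authors evidently judged unnecessary to reproduce.
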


 For the proof of this lemma, see Petersen \cite{Petersen[16]}, {\bf 4.2.3}, p.121.

\begin{lem} \label{lem:curv-bdd-below}
 The sectional curvature $K_{C_{\e}}$ on the hyperbolic dumbbell 
 $(C_{\e}, g_{\e})$ is uniformly bounded below in $\e$.
 That is, there exists some positive constant $\kappa^{\prime} > 0$ 
 independent of $\e$ such that 
\begin{equation*}
\begin{split}
       K_{C_{\e}} &\ge - \kappa^{\prime}.
\end{split}
\end{equation*}
\end{lem}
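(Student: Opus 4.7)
The plan is to decompose the dumbbell $(C_\e, g_\e)$ into three pieces — the hyperbolic cylinder $C_{0,\e}$, the two symmetric transition collars (parametrized by $s \in [0, \tfrac{\pi}{12}]$), and the two spherical caps $B_1, B_2$ — and to bound the sectional curvatures below on each piece by a constant independent of $\e$. Since every one of these pieces is a warped product over $({\Bbb S}^{n-1}, g_{{\Bbb S}^{n-1}})$, which has $K_N \equiv 1$, Lemma \ref{lem:sect-curv-warped-product} reduces the task to controlling the two quantities
\begin{equation*}
  -\frac{f^{\prime\prime}(r)}{f(r)}
  \qquad \text{and} \qquad
  \frac{1 - (f^{\prime}(r))^2}{f^2(r)}
\end{equation*}
from below by an $\e$-independent constant on each region.

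On the spherical caps (away from the collars) the metric is the standard round one, so both sectional curvatures are identically $1$. On the hyperbolic cylinder $C_{0,\e}$ the warping function is $f(r) = \e \cosh(r)$, so $f^{\prime\prime}/f \equiv 1$ gives $K(\partial_r, \widetilde{X}) \equiv -1$; and using $\cosh^2 r - \sinh^2 r = 1$ one computes
\begin{equation*}
  K(\widetilde{X}, \widetilde{Y})
  = \frac{1 - \e^2 \sinh^2 r}{\e^2 \cosh^2 r}
  = \frac{1+\e^2}{\e^2 \cosh^2 r} - 1
  \ge -1,
\end{equation*}
uniformly in $r \in [-L, L]$ and in $\e$.

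The substantive case is the transition collar, where $f = F_\e$ from \eqref{eq:smooth-connect}. The crucial inputs are already established in the paragraphs preceding the lemma: for $\e < \e_0$ the functions $F_\e$, $F_\e^{\prime}$, $F_\e^{\prime\prime}$ are uniformly bounded on $[0, \tfrac{\pi}{12}]$, and by \eqref{eq:F_e-bdd} one has $F_\e \ge 1/2$. Combining these two facts shows that both $|F_\e^{\prime\prime}/F_\e|$ and $((F_\e^{\prime})^2 + 1)/F_\e^2$ are bounded above by some $\kappa_0 > 0$ independent of $\e$, hence both sectional curvatures on the collar are $\ge -\kappa_0$. Setting $\kappa' := \max\{1, \kappa_0\}$ then yields the desired bound on all of $C_\e$. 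I do not expect a serious obstacle here: the only delicate point is the uniform-in-$\e$ bound on the derivatives of $F_\e$, and this follows from the fact that the $\e$-dependent correction term $\tfrac{\e^2}{2} e^{-s} \chi(s)$ in \eqref{eq:func-f_e}--\eqref{eq:smooth-connect} and its derivatives tend to $0$ as $\e \to 0$, reducing the problem to a uniform bound for the $\e$-independent combination $\chi(s) \tfrac{1}{2} e^s + (1-\chi(s)) h(s)$ on a compact interval.
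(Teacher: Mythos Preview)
Your proposal is correct and follows essentially the same approach as the paper: decompose $C_\e$ into the hyperbolic cylinder, the transition collars, and the spherical caps, then apply Lemma~\ref{lem:sect-curv-warped-product} on each piece, using the uniform bounds on $F_\e, F_\e', F_\e''$ together with \eqref{eq:F_e-bdd} on the collar. The only cosmetic difference is that the paper dismisses the caps in one line (the metric there is independent of $\e$) rather than noting $K\equiv 1$, and bounds $K(\widetilde X,\widetilde Y)$ on $C_{0,\e}$ via $-\tanh^2 r \ge -1$ rather than your equivalent rearrangement.
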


\begin{proof}
 Since the metric on the both-sides bumps is independent of $\e$,
 we have only to show the boundedness on the central part of $C_{\e}.$

 We use the same notation as in Lemma \ref{lem:sect-curv-warped-product}.
 On the hyperbolic cylinder $C_{0,\e}$, from \eqref{eq:hyp-cylinder}, 
 we have 
\begin{equation*}
\begin{split}
 K(\partial_r, \widetilde{X}) 
   &= - \dfrac{ \e \cosh^{\prime \prime}(r)}{ \e \cosh(r) }
    = - \dfrac{\cosh(r)}{\cosh(r)} = -1, \\[0.2cm]
  K(\widetilde{X}, \widetilde{Y}) 
    &=  \dfrac{1 - \big(  \e \cosh^{\prime}(r) \big)^2 }{ 
          \big( \e \cosh(r) \big)^2 } 
   \ge  - \dfrac{ \e^{2} \sinh^{2}(r) }{\e^{2} \cosh^2(r) }  
    =   - \dfrac{ \sinh^2(r) }{\cosh^2(r)}
   \ge  -1.
\end{split}
\end{equation*}
 Thus, the sectional curvature on the hyperbolic cylinder $C_{0,\e}$
 is uniformly bounded below by $-1$.

 Next, around the right-side of the boundary $\partial C_{0, \e}$, 
 since the Riemannian metric is expressed as \eqref{eq:connecting-metric}, 
 there exist positive constants $\kappa_1, \kappa_2 >0$ independent of $\e$ 
 such that 
\begin{equation*}
\begin{split}
 K(\partial_r, \widetilde{X}) 
   &= - \dfrac{ F^{\prime \prime}_{\e}(s)}{ F_{\e}(s) }
    \ge - \kappa_1,  \\[0.2cm]
 K(\widetilde{X}, \widetilde{Y}) 
    &=  \dfrac{1 - \big( F^{\prime}_{\e}(s) \big)^2 }{ F^2_{\e}(s) }
   \ge  - \kappa_2.
\end{split}
\end{equation*}

 Therefore, we find that the sectional curvature on the hyperbolic dumbbell
 $(C_{\e}, g_{\e})$ is uniformly bounded below in $\e$.
\end{proof}

 Furthermore, the volume of the hyperbolic dumbbell $(C_{\e}, g_{\e})$ 
 is uniformly bounded in $\e$.

\begin{lem} \label{lem:vol-finite}
 There exist two positive constants $V_1, V_2 >0$ independent of $\e$
 such that 
\begin{equation} \label{eq:volume-estimate}
\begin{split}
   0< V_1 &\le \vol(C_{\e}, g_{\e}) \le V_2.
\end{split}
\end{equation}
\end{lem}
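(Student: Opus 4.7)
The plan is to decompose $C_{\e}$ into three pieces whose volumes can be controlled separately: the two bumps $B_1, B_2$, the hyperbolic cylinder $C_{0,\e} = [-L,L]\times \S^{n-1}$, and the two small connecting collars where the metric has warping function $F_{\e}$. The metric on the bumps $B_1 \cup B_2$ is independent of $\e$, so their contribution is a fixed positive constant that already gives a positive lower bound $V_1 > 0$ uniform in $\e$. Thus it suffices to prove that the contributions of the cylinder and of the collars are uniformly bounded above.

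For the connecting collars, the metric \eqref{eq:connecting-metric} is a warped product on $[0,\pi/12]\times \S^{n-1}$ with warping function $F_{\e}$, so its volume is $\vol(\S^{n-1})\int_0^{\pi/12} F_{\e}^{n-1}(s)\, ds$. The uniform estimate \eqref{eq:F_e-bdd}, namely $0.5 \le F_{\e}(s) < 0.8$, immediately bounds this integral by a constant independent of $\e$.

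The core calculation is the volume of the hyperbolic cylinder itself. From \eqref{eq:hyp-cylinder}, Fubini gives
\begin{equation*}
  \vol(C_{0,\e}, g_{\e}) = \vol(\S^{n-1})\int_{-L}^{L} \bigl(\e \cosh(r)\bigr)^{n-1} dr
  = 2\,\vol(\S^{n-1})\int_{0}^{L} \bigl(\e \cosh(r)\bigr)^{n-1} dr.
\end{equation*}
Using the two-sided bound $\tfrac{1}{2} e^{r} \le \cosh(r) \le e^{r}$ valid for $r \ge 0$, together with the key identity $\e\, e^{L} = 1$ (since $L = |\log \e|$), I get
\begin{equation*}
  \int_{0}^{L} \bigl(\e \cosh(r)\bigr)^{n-1} dr \le \int_{0}^{L} \e^{n-1} e^{(n-1)r} dr = \frac{1 - \e^{n-1}}{n-1} \le \frac{1}{n-1},
\end{equation*}
and likewise a positive lower bound of order $1/(2^{n-1}(n-1))$. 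Hence $\vol(C_{0,\e}, g_{\e})$ is uniformly bounded in $\e$. Summing the three contributions yields \eqref{eq:volume-estimate}.

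There is no real obstacle here; the only point that needs care is noticing that the hyperbolic growth $\cosh^{n-1}(r)$ is precisely compensated by the factor $\e^{n-1}$ through the relation $\e\, e^{L}=1$, which is exactly the choice $L=|\log\e|$ built into the construction.
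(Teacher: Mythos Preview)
Your proof is correct and follows essentially the same route as the paper: bound the hyperbolic cylinder volume via $\cosh(r)\le e^r$ together with the identity $\e\,e^{L}=1$, and get the lower bound from the $\e$-independent bumps $B_1\cup B_2$. The only cosmetic difference is that you treat the connecting collars as a separate piece and bound them explicitly via \eqref{eq:F_e-bdd}, whereas the paper absorbs them into the statement that the volumes of $B_1,B_2$ are bounded; your extra lower bound on $\vol(C_{0,\e},g_\e)$ is correct but not needed.
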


\begin{proof}
 We first estimate the volume of the $n$-dimensional hyperbolic cylinder 
 $(C_{0,\e}, g_{\e})$ from above. 
 The volume of $(C_{0,\e}, g_{\e})$ is
\begin{equation*} \label{eq:vol-hyp-cylinder-1}
\begin{split}
 \vol(C_{0,\e}, g_{\e}) &= 2 \, \dint^L_0 \dint_{{\Bbb S}^{n-1}}
     \big( \e \cosh (r) \big)^{n-1} dr d \mu_{g_{{\Bbb S}^{n-1}}} \\
   &= 2 \vol({\Bbb S}^{n-1}) \, \e^{n-1} \, \dint^L_0 \cosh^{n-1}(r) dr.
\end{split}
\end{equation*}
 Since $\cosh(r)\leq e^r$ for $r\geq 0$ and $L = - \log \e$, we have 
\begin{equation} \label{eq:int-cosh^n}
\begin{split}
  \dint^L_0 \cosh^{n-1}(r) dr 
   &\le  \dint^L_0 e^{(n-1)r}dr  = \dfrac{1}{n-1}(e^{(n-1)L}-1) \\
   &\le \dfrac{1}{n-1}e^{(n-1)L} = \dfrac{1}{n-1}\e^{-(n-1)},
\end{split}
\end{equation}
 and finally
\begin{equation*} 
\begin{split}
  \vol(C_{0,\e}, g_{\e}) &\le \dfrac{2}{n-1}\vol({\Bbb S}^{n-1}).
\end{split}
\end{equation*}
 Thus, the volume of $(C_{0,\e}, g_{\e})$ is finite for $\e$.
 Since the volumes of $B_1$ and $B_2$ are bounded above in $\e$,
 the total volume of the hyperbolic dumbbell $(C_{\e}, g_{\e})$ 
 is unform bounded above in $\e$.

 On the other hand, since the metrics on $B_1$ and $B_2$ away from 
 their boundaries coincide with the standard sphere metrics, 
 there exists a uniform lower bound of the volume:
\begin{equation*} 
\begin{split}
  \vol(C_{\e}, g_{\e}) &\ge  \vol(B_1) + \vol(B_2) 
    \ge  \frac{1}{2} \vol({\Bbb S}^n) + \frac{1}{2} \vol({\Bbb S}^n) 
    = \vol({\Bbb S}^n) := V_1.
\end{split}
\end{equation*}
\end{proof}


\subsection{The connected sum of $k$-copies of the hyperbolic dumbbell}

\label{sub-sect:k-hyp-dumbbells}

 Next, we perform the connected sum of $k$-copies of the hyperbolic dumbbell 
 in series. The resulting Riemannian manifold is denoted by 
 $C_{k,\e} = \overset{k}{\sharp} C_{\e}$ with the periodic metric $g_{C_{k,\e}}$
 (see the Figure $2$).

\vspace{0.2cm}

\begin{figure}[H] \label{fig:k-copies-hyp-dumbbell}
\begin{center}
\begin{tikzpicture}[scale=0.42]
  \draw (0,-0.1) arc [radius=0.1, start angle=-90, end angle=90];
  \draw[dotted] (0,0.1) arc [radius=0.1, start angle=90, end angle=270];
  \draw [domain=-3:3] plot (\x,{(cosh(\x))/10});
  \draw [domain=-3:3] plot (\x,{- (cosh(\x))/10});
  \draw[<->] (-3.0,-1.5) -- (3.0,-1.5);
  \draw (0,-1.4) -- (0,-1.6); 
  \draw (0,-1.5)  node[below] {$0$};  
  \draw (3.0,-1.5)  node[below] {$L$}; 
  \draw (-3.0,-1.5) node[below] {$-L$}; 
  \draw[->] (0,1.5) -- (0,0.3);
  \draw[->] (0,-1.2) -- (0,-0.3);
  \draw (0.6,1.0) node {$\e$};
  \draw (-3,1) arc [radius=sqrt(2), start angle=45, end angle=315];
  \draw (-3,-1) arc [radius=sqrt(2), start angle=-45, end angle=45];
  \draw[dotted] (-3,1) arc [radius=sqrt(2), start angle=135, end angle=225];
  \draw (-4,0) node {$B_1$};
  \draw (3,-1) arc [radius=sqrt(2), start angle=-135, end angle=135];
  \draw (3,-1) arc [radius=sqrt(2), start angle=-45, end angle=45];
  \draw[dotted] (3,1) arc [radius=sqrt(2), start angle=135, end angle=225];
  \draw[dotted] (5,1) arc [radius=sqrt(2), start angle=135, end angle=225];
  \draw (4.1,0) node {$B_2$};
  \draw [domain=-3:3] plot ({\x +8},{(cosh(\x))/10});
  \draw [domain=-3:3] plot ({\x +8},{- (cosh(\x))/10});
  \draw (8,-0.1) arc [radius=0.1, start angle=-90, end angle=90];
  \draw[dotted] (8,0.1) arc [radius=0.1, start angle=90, end angle=270];
  \draw (11,-1) arc [radius=sqrt(2), start angle=-135, end angle=135];
  \draw (11,-1) arc [radius=sqrt(2), start angle=-45, end angle=45];
  \draw[dotted] (11,1) arc [radius=sqrt(2), start angle=135, end angle=225];
  \draw[dotted] (13,1) arc [radius=sqrt(2), start angle=135, end angle=225];
  \draw (12.1,0) node {$B_3$};
  \draw [domain=-3:-1] plot ({\x +16},{(cosh(\x))/10});
  \draw [domain=-3:-1] plot ({\x +16},{- (cosh(\x))/10});
  \draw[thick, dotted] (15.2,0) -- (16.2,0);
  \draw [domain=1:3] plot ({\x +15.5},{(cosh(\x))/10});
  \draw [domain=1:3] plot ({\x +15.5},{- (cosh(\x))/10});
  \draw (18.5,-1) arc [radius=sqrt(2), start angle=-135, end angle=135];
  \draw (18.5,-1) arc [radius=sqrt(2), start angle=-45, end angle=45];
  \draw[dotted] (18.5,1) arc [radius=sqrt(2), start angle=135, end angle=225];
  \draw[dotted] (20.5,1) arc [radius=sqrt(2), start angle=135, end angle=225];
  \draw (19.6,0) node {$B_{k}$};
  \draw [domain=-3:3] plot ({\x +23.5},{(cosh(\x))/10});
  \draw [domain=-3:3] plot ({\x +23.5},{- (cosh(\x))/10});
  \draw (26.5,-1) arc [radius=sqrt(2), start angle=-135, end angle=135];
  \draw (26.5,-1) arc [radius=sqrt(2), start angle=-45, end angle=45];
  \draw[dotted] (26.5,1) arc [radius=sqrt(2), start angle=135, end angle=225];
  \draw (27.9,0) node {$B_{k+1}$};
\end{tikzpicture}
\end{center}
   \caption{the connected sum of $k$-copies of the hyperbolic dumbbell}
\end{figure}

\vspace{-0.2cm}

 From the construction, $(C_{k,\e}, g_{C_{k,\e}})$ also satisfies 
 the following property:

\begin{lem} \label{lem:properties-k-hyp-dumbbells}
\begin{enumerate} 
 \renewcommand{\labelenumi}{$(\roman{enumi})$}
 \item  The sectional curvature of $(C_{k,\e}, g_{C_{k,\e}})$ is uniformly 
 bounded below in $\e$;
 \item  The volume of $(C_{k,\e}, g_{C_{k,\e}})$ is uniformly bounded in $\e$.
\end{enumerate}
\end{lem}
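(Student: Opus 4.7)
The plan is to reduce both claims directly to Lemma \ref{lem:curv-bdd-below} and Lemma \ref{lem:vol-finite} by observing that $(C_{k,\e}, g_{C_{k,\e}})$ is locally isometric, piece by piece, to portions of a single hyperbolic dumbbell: it consists of $k$ hyperbolic cylinders together with their connecting regions and $k+1$ spherical bumps, and all the relevant bounds have already been proved in that local setting.

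First I would record the local structure explicitly. By the periodic construction depicted in Figure~$2$, every point $x \in C_{k,\e}$ has a neighborhood isometric to an open set of some $(C_\e, g_\e)$: either $x$ lies in one of the hyperbolic cylinders carrying the metric $dr^2 \oplus \e^2 \cosh^2(r)\, g_{\S^{n-1}}$, in one of the connecting regions carrying the warped metric $ds^2 \oplus F_\e^2(s)\, g_{\S^{n-1}}$ of \eqref{eq:connecting-metric}, or in the interior of one of the bumps $B_1, \ldots, B_{k+1}$, where the metric is a fixed standard sphere metric independent of $\e$. Smoothness at the $k-1$ interior bump-cylinder junctions is automatic because each bump carries the standard sphere metric in a collar of its boundary, so nothing additional has to be verified at the connected-sum stage. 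Since the sectional curvature is a pointwise invariant, the uniform lower bound $K \geq -\kappa'$ established in Lemma \ref{lem:curv-bdd-below} transfers verbatim to every point of $C_{k,\e}$, which proves (i) with the same constant $\kappa = \kappa'$.

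For (ii) I would decompose the total volume as a finite sum over the $k$ cylinders, their connecting regions, and the $k+1$ bumps. Each hyperbolic cylinder contributes at most $\frac{2}{n-1}\vol(\S^{n-1})$ by the estimate \eqref{eq:int-cosh^n} used in the proof of Lemma \ref{lem:vol-finite}; each connecting region has volume uniformly bounded in $\e$ thanks to the pointwise bound \eqref{eq:F_e-bdd} on $F_\e$; and each bump contributes a fixed $\e$-independent amount, because its metric is fixed. Summing these finitely many uniformly bounded terms yields a uniform upper bound $V'_2 = V'_2(k,n)$ on $\vol(C_{k,\e}, g_{C_{k,\e}})$. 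A uniform positive lower bound follows immediately from a single end bump, say $B_1$, which contains at least half of a standard sphere and so contributes at least $\tfrac{1}{2}\vol(\S^n)$ to the total.

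The argument is essentially a locality-and-bookkeeping one, and I do not anticipate any genuine obstacle: everything reduces to the analysis already carried out in Lemmas \ref{lem:curv-bdd-below} and \ref{lem:vol-finite} for a single hyperbolic dumbbell. The only point requiring attention is the smoothness and curvature control at the interior bump-cylinder junctions, but this is already guaranteed by the standard-sphere collar convention built into the single-dumbbell construction.
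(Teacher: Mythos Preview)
Your proposal is correct and follows exactly the reasoning the paper intends: in the paper this lemma is stated without proof, prefaced only by ``From the construction, $(C_{k,\e}, g_{C_{k,\e}})$ also satisfies the following property,'' so the authors treat it as an immediate consequence of Lemmas \ref{lem:curv-bdd-below} and \ref{lem:vol-finite} applied piece by piece. Your argument simply makes this explicit, and there is no substantive difference in approach.
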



\section{Construction of the Riemannian metrics}

 We construct a one-parameter family of Riemannian metrics 
 $\{ \overline{g}_{\e} \}_{\e >0}$ on any closed manifold $M$ 
 with the volume one and the sectional curvature uniformly bounded below:
 $K_{\overline{g}_{\e}} \ge - \kappa$, where $\kappa >0$ is independent of $\e$.

\vspace{0.2cm}
 Let $M$ be a connected oriented closed $C^{\infty}$-manifold of dimension $m \ge 2$.
 For a given degree $p$ with $0 \le p \le m-2$, 
 we can take an embedded $p$-dimensional sphere ${\Bbb S}^p$ into $M$ 
 whose normal bundle is trivial.
 Then, a closed tubular neighborhood $\Tub({\Bbb S}^p)$ of ${\Bbb S}^p$ in $M$ 
 can be identified with 
\begin{equation} \label{eq:tubular-nbd}
\begin{split}
   \Tub({\Bbb S}^p) &\cong {\Bbb S}^p \times {\Bbb D}^{m-p},
\end{split}
\end{equation}
 where ${\Bbb D}^n$ denotes the $n$-dimensional closed unit disk in $\R^n$.

 We now take any Riemannian metric $g_{p,M}$ on $M$ such that 
 $g_{p,M}$ on $\Tub({\Bbb S}^p)$ is the product metric of 
 $g_{{\Bbb S}^p}$ on ${\Bbb S}^p$ and the standard Euclidean metric $g_{\R^{m-p}}$ 
 on ${\Bbb D}^{m-p}$:
\begin{equation} \label{eq:product-tubular-nbd}
\begin{split}
  g_{p,M} &=  g_{{\Bbb S}^p} \oplus g_{\R^{m-p}}
   \quad  \text{ on } \Tub({\Bbb S}^p) = {\Bbb S}^p \times {\Bbb D}^{m-p}.
\end{split}
\end{equation}
  We decompose $M$ into the two components $H_1, H_2$: 
\begin{equation} \label{eq:decomp-H}
\begin{split}
  H_1 &:=  {\Bbb S}^{p} \times {\Bbb D}^{m-p},  \\
  H_2 &:=  \overline{M \setminus H_1} 
       =   \overline{M \setminus ({\Bbb S}^{p} \times {\Bbb D}^{m-p})}.
\end{split}
\end{equation}
 Then, while fixing the metric $g_{p,M}$ on $H_2$, 
 we change the metric $g_{p,M}$ on $H_1$ to a new metric.

 For any real number $\e >0$ and any natural number $k \ge 1$,
 as constructed in the previous sub-section {\bf \ref{sub-sect:k-hyp-dumbbells}},
 we take the connected sum of $k$-copies of the $(m-p)$-dimensional 
 hyperbolic dumbbell $C_{k,\e} = \overset{k}{\large \sharp} C_{\e}$ 
 with the Riemannian metrics $g_{C_{k,\e}}$, 
 and glue it to the second factor ${\Bbb D}^{m-p}$ of $H_1$
 (See the Figure $3$).
 This gluing can be done independently of $\e$.
 We also use the same notation of this new Riemannian metrics $g_{C_{k,\e}}$
 on the gluing $C_{k,\e} \sharp {\Bbb D}^{m-p} \cong {\Bbb D}^{m-p}_{\e}$.

\vspace{-0.3cm}

\begin{figure}[H] \label{fig:k-copies-hyp-dumbbells}
\begin{center}
\begin{tikzpicture}[scale=0.45]
  \draw (0,-0.1) arc [radius=0.1, start angle=-90, end angle=90];
  \draw[dotted] (0,0.1) arc [radius=0.1, start angle=90, end angle=270];
  \draw [domain=-3:3] plot (\x,{(cosh(\x))/10});
  \draw [domain=-3:3] plot (\x,{- (cosh(\x))/10});
  \draw[<->] (-3.0,-1.5) -- (3.0,-1.5);
  \draw (0,-1.4) -- (0,-1.6); 
  \draw (0,-1.5)  node[below] {$0$};  
  \draw (3.0,-1.5)  node[below] {$L$}; 
  \draw (-3.0,-1.5) node[below] {$-L$}; 
  \draw[->] (0,1.5) -- (0,0.3);
  \draw[->] (0,-1.2) -- (0,-0.3);
  \draw (0.6,1.0) node {$\e$};
  \draw (-3,1) arc [radius=sqrt(2), start angle=45, end angle=315];
  \draw (-3,-1) arc [radius=sqrt(2), start angle=-45, end angle=45];
  \draw[dotted] (-3,1) arc [radius=sqrt(2), start angle=135, end angle=225];
  \draw (-4,0) node {$B_1$};
  \draw (3,-1) arc [radius=sqrt(2), start angle=-135, end angle=135];
  \draw (3,-1) arc [radius=sqrt(2), start angle=-45, end angle=45];
  \draw[dotted] (3,1) arc [radius=sqrt(2), start angle=135, end angle=225];
  \draw[dotted] (5,1) arc [radius=sqrt(2), start angle=135, end angle=225];
  \draw (4,0) node {$B_2$};
  \draw [domain=-3:3] plot ({\x +8},{(cosh(\x))/10});
  \draw [domain=-3:3] plot ({\x +8},{- (cosh(\x))/10});
  \draw (8,-0.1) arc [radius=0.1, start angle=-90, end angle=90];
  \draw[dotted] (8,0.1) arc [radius=0.1, start angle=90, end angle=270];
  \draw (11,-1) arc [radius=sqrt(2), start angle=-135, end angle=135];
  \draw (11,-1) arc [radius=sqrt(2), start angle=-45, end angle=45];
  \draw[dotted] (11,1) arc [radius=sqrt(2), start angle=135, end angle=225];
  \draw[dotted] (13,1) arc [radius=sqrt(2), start angle=135, end angle=225];
  \draw (12,0) node {$B_3$};
  \draw [domain=-3:-1] plot ({\x +16},{(cosh(\x))/10});
  \draw [domain=-3:-1] plot ({\x +16},{- (cosh(\x))/10});
  \draw[thick, dotted] (15.2,0) -- (16.2,0);
  \draw [domain=1:3] plot ({\x +15.5},{(cosh(\x))/10});
  \draw [domain=1:3] plot ({\x +15.5},{- (cosh(\x))/10});
  \draw (18.5,-1) arc [radius=sqrt(2), start angle=-135, end angle=135];
  \draw (18.5,-1) arc [radius=sqrt(2), start angle=-45, end angle=45];
  \draw[dotted] (18.5,1) arc [radius=sqrt(2), start angle=135, end angle=225];
  \draw[dotted] (20.5,1) arc [radius=sqrt(2), start angle=135, end angle=225];
  \draw (19.5,0) node {$B_k$};
  \draw [domain=-3:3.2] plot ({\x +23.5},{(cosh(\x))/10});
  \draw [domain=-3:3.2] plot ({\x +23.5},{- (cosh(\x))/10});
  \draw (23.5,-0.1) arc [radius=0.1, start angle=-90, end angle=90];
  \draw[dotted] (23.5,0.1) arc [radius=0.1, start angle=90, end angle=270];
  \draw (26.7,-1.2) arc (-55:55:0.5cm and 1.5cm);
  \draw[dotted] (26.7,1.2) arc (130:220:0.5cm and 1.5cm);
  \draw (26.2,-0.7) arc (-165:160:0.8cm and 2.5cm);
  \draw[dotted] (26.3,0.8) arc (150:190:0.8cm and 2.5cm);
  \draw (27.5,3.2) node {${\Bbb D}^{m-p}$};
\end{tikzpicture}
 \vspace{-1.2cm}
\end{center}
   \caption{gluing $C_{k,\e}$ to ${\Bbb D}^{m-p}$}
\end{figure}

\vspace{-0.3cm}

 Thus, we obtain a one-parameter family of Riemannian metrics $g_{\e,p,k}$
 on  $H_1 = \Tub({\Bbb S}^p) = {\Bbb S}^p \times {\Bbb D}^{m-p}_{\e}$ as 
\begin{equation}  \label{eq:metric-H_1}
\begin{split}
  g_{\e,p,k} &:= g_{{\Bbb S}^p} \oplus g_{C_{k,\e}}.
\end{split}
\end{equation}
 Then, we define the one-parameter family of Riemannian metrics 
 $\{ g_{\e,p,k} \}_{\e >0}$ on $M$ as 
\begin{equation} \label{eq:metric-g_e}
\begin{split}
  g_{\e,p,k} &:= 
 \begin{cases}
   g_{{\Bbb S}^p} \oplus g_{C_{k,\e}}  &  \text{ on } 
        H_1 = {\Bbb S}^{p} \times {\Bbb D}^{m-p}_{\e}, \\
   \quad  g_{p,M}   &  \text{ on } H_2 =  \overline{M \setminus H_1}.
 \end{cases}
\end{split}
\end{equation}

\vspace{0.2cm}
 Finally, we normalize this metric $g_{\e,p,k}$ whose total volume is one.
 That is, we define 
\begin{equation} \label{eq:normalized-g_{e,p,k}}
\begin{split}
 \overline{g}_{\e,p,k}
    &:= \vol(M,g_{\e,p,k})^{- \frac{2}{m}} \, g_{\e,p,k} \ \text{ on } M.
\end{split}
\end{equation}
 Then, $\vol(M, \overline{g}_{\e,p,k}) \equiv 1$.

\begin{lem}[Estimates of the volume] \label{lem:M-vol-estimate}
 There exist positive constants $V, A, B >0$ independent of 
 $\e$ and $k$ such that 
\begin{equation} \label{eq:M-vol-estimate}
\begin{split}
   0< V &\le \vol(M, g_{\e,p,k}) \le A k + B.
\end{split}
\end{equation}
\end{lem}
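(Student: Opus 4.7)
The plan is to split the volume additively along the decomposition $M = H_1 \cup H_2$ from \eqref{eq:decomp-H} and then exploit the product structure on $H_1$ together with the uniform volume control already obtained on a single hyperbolic dumbbell.

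First I would write
\begin{equation*}
  \vol(M, g_{\e,p,k}) \;=\; \vol(H_1, g_{\e,p,k}) + \vol(H_2, g_{p,M}),
\end{equation*}
and observe that the second summand is a constant $B_0 := \vol(H_2, g_{p,M}) > 0$ depending only on the fixed background metric $g_{p,M}$, hence independent of both $\e$ and $k$. This already furnishes the lower bound: since $B_0 > 0$ we may take $V := B_0$, and obtain $\vol(M, g_{\e,p,k}) \ge V$ for every $\e$ and $k$.

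For the upper bound, on $H_1 = \mathbb{S}^p \times \mathbb{D}^{m-p}_{\e}$ the metric $g_{\e,p,k}$ is the product $g_{\mathbb{S}^p} \oplus g_{C_{k,\e}}$ by \eqref{eq:metric-H_1}, so Fubini gives
\begin{equation*}
  \vol(H_1, g_{\e,p,k}) \;=\; \vol(\mathbb{S}^p, g_{\mathbb{S}^p})\, \cdot \, \vol(\mathbb{D}^{m-p}_{\e}, g_{C_{k,\e}}).
\end{equation*}
Now $\mathbb{D}^{m-p}_{\e}$ is obtained by gluing $C_{k,\e} = \overset{k}{\sharp}C_\e$ to the original disk $\mathbb{D}^{m-p}$, and this gluing is performed independently of $\e$, so
\begin{equation*}
  \vol(\mathbb{D}^{m-p}_{\e}, g_{C_{k,\e}}) \;\le\; \vol(C_{k,\e}, g_{C_{k,\e}}) + C_0
\end{equation*}
for some constant $C_0$ accounting for the fixed disk part. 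By Lemma~\ref{lem:properties-k-hyp-dumbbells} (ii), or equivalently by iterating the bound $\vol(C_\e, g_\e) \le V_2$ from Lemma~\ref{lem:vol-finite} over the $k$ summands of the connected sum, we get $\vol(C_{k,\e}, g_{C_{k,\e}}) \le k V_2 + C_1$ for a constant $C_1$ absorbing the (bounded) cost of the $k-1$ connected-sum gluings, all performed in an $\e$-independent way.

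Combining these estimates yields
\begin{equation*}
  \vol(M, g_{\e,p,k}) \;\le\; \vol(\mathbb{S}^p, g_{\mathbb{S}^p})\,\bigl(k V_2 + C_1 + C_0\bigr) + B_0 \;=:\; A k + B,
\end{equation*}
which is the desired upper bound with $A, B > 0$ independent of $\e$ and $k$. The only mildly delicate point is ensuring that the connected-sum gluings that produce $C_{k,\e}$ (and the gluing of $C_{k,\e}$ to $\mathbb{D}^{m-p}$) can be done with a bounded overhead uniformly in $\e$ and $k$; this is built into the construction of Section~3.2, where the gluing regions are the standard-sphere bumps $B_j$ whose metrics are fixed independently of $\e$, so each gluing contributes at most a fixed amount to the volume and the total overhead grows at worst linearly in $k$, which is already absorbed into the $Ak$ term.
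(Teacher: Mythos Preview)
Your argument is correct and follows essentially the same decomposition $M = H_1 \cup H_2$ and product structure on $H_1$ as the paper. The only notable difference is the source of the lower bound: you take $V := \vol(H_2, g_{p,M})$, which is manifestly independent of $\e$ and $k$, whereas the paper instead bounds $\vol(M, g_{\e,p,k}) \ge \vol(H_1, g_{\e,p,k}) \ge \vol(\mathbb{S}^p)\cdot V_1$ using the lower bound $V_1$ from Lemma~\ref{lem:vol-finite} on a single dumbbell. Your choice is slightly more direct since it avoids invoking that lemma for the lower bound; conversely, the paper's upper bound is terser, absorbing the disk and gluing contributions you call $C_0, C_1$ directly into the bound $\vol(C_{k,\e}) \le k\,\vol(C_{1,\e})$. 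Both routes yield the same conclusion with no substantive difference.
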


\begin{proof}
 For an upper bound, from Lemma \ref{lem:vol-finite}, we have 
\begin{equation*} \label{eq:vol-upper-bdd}
\begin{split}
 \vol(M, g_{\e,p,k}) 
  &= \vol(H_1, g_{\e,p,k}) + \vol(H_2, g_{\e,p,k}) \\
  &= \vol({\Bbb S}^p) \cdot \vol(C_{k,\e}, g_{C_{k,\e}})
        + \vol(H_2, g_{p,M}) \\
  &\le  \vol({\Bbb S}^p) \cdot \vol(C_{1,\e}, g_{C_{1,\e}}) \, k
        + \vol(H_2, g_{p,M})  \\
  &\le  A k + B,
\end{split}
\end{equation*}
 where $A, B >0$ are some constants independent of $\e$ and $k$.

 For a positive lower bound, from Lemma \ref{lem:vol-finite}, 
 we also have
\begin{equation*} \label{eq:vol-lower-bdd}
\begin{split}
 \vol(M, g_{\e,p,k}) 
  &= \vol(H_1, g_{\e,p,k}) + \vol(H_2, g_{\e,p,k}) \\
  &\ge \vol(H_1, g_{\e,p,k})
   =   \vol({\Bbb S}^p) \cdot \vol(C_{k,\e}, g_{C_{k,\e}}) \\
  &\ge  \vol({\Bbb S}^p) \cdot \vol(C_{1,\e}, g_{C_{1,\e}})
   \ge  \vol({\Bbb S}^p) \cdot V_1 > 0.
\end{split}
\end{equation*}
\end{proof}

 Hence, from the property of the sectional curvature 
\begin{equation*} \label{eq:sect-curv-scaling}
\begin{split}
  K_{(M, \overline{g}_{\e,p,k})}
   &= \vol(M,g_{\e,p,k})^{\frac{2}{m}} \, K_{(M, g_{\e,p,k})}
\end{split}
\end{equation*}
 and Lemma \ref{lem:M-vol-estimate}, we find that 
 the family of volume-normalized Riemannian metrics
 $\{ \overline{g}_{\e,p,k} \}_{\e >0}$ on $M$ defined in 
 \eqref{eq:normalized-g_{e,p,k}} satisfies the same properties
 as in Lemma \ref{lem:properties-k-hyp-dumbbells}.

\begin{lem} \label{lem:properties}
\begin{enumerate} 
 \renewcommand{\labelenumi}{$(\roman{enumi})$}
 \item  The sectional curvature of $(M, \overline{g}_{\e,p,k})$ 
  is uniformly bounded below in $\e$;
 \item  The volume of $(M, \overline{g}_{\e,p,k})$ is identically one.
\end{enumerate}
\end{lem}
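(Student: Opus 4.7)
The volume assertion (ii) is an immediate consequence of the normalization and the scaling identities in Lemma \ref{lem:basic}: since $\vol(M, a g) = a^{m/2} \vol(M, g)$, choosing $a = \vol(M, g_{\e,p,k})^{-2/m}$ as in \eqref{eq:normalized-g_{e,p,k}} forces $\vol(M, \overline{g}_{\e,p,k}) \equiv 1$. So the work lies in (i).

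For (i), the strategy is to first obtain a uniform lower bound $K_{g_{\e,p,k}} \geq - \kappa_0$ for the \emph{unnormalized} metric, and then transport it to $\overline{g}_{\e,p,k}$ via the conformal scaling $K_{ag} = a^{-1} K_g$. On $H_2$, the metric coincides with the fixed Riemannian metric $g_{p,M}$ of \eqref{eq:product-tubular-nbd}, which is independent of $\e$; its sectional curvature is bounded below by some constant $-\kappa_2$. On $H_1 = {\Bbb S}^p \times {\Bbb D}^{m-p}_{\e}$, the metric $g_{{\Bbb S}^p} \oplus g_{C_{k,\e}}$ is a Riemannian product, so for any orthonormal pair $(U,V)$ the sectional curvature decomposes into contributions from the ${\Bbb S}^p$-factor, the $C_{k,\e}$-factor, and mixed planes (which have curvature zero). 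Consequently
\[
   K_{H_1} \;\geq\; \min \bigl\{ 0,\; \inf K_{{\Bbb S}^p},\; \inf K_{C_{k,\e}} \bigr\}.
\]
By Lemma \ref{lem:properties-k-hyp-dumbbells} (i) the last term is uniformly bounded below in $\e$, and $\inf K_{{\Bbb S}^p} = 1$, so $K_{H_1}$ admits a uniform lower bound. The gluing of $C_{k,\e}$ into the disk ${\Bbb D}^{m-p}$ was arranged in Section $4$ to be $\e$-independent near the interface with $H_2$, which prevents any curvature blow-up from appearing in the transition region. Combining the two pieces yields $K_{g_{\e,p,k}} \geq - \kappa_0$ with $\kappa_0 > 0$ independent of $\e$.

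Finally, applying the scaling relation with $a = \vol(M, g_{\e,p,k})^{-2/m}$ gives
\[
   K_{\overline{g}_{\e,p,k}} \;=\; \vol(M, g_{\e,p,k})^{\frac{2}{m}} \, K_{g_{\e,p,k}} \;\geq\; - (A k + B)^{\frac{2}{m}} \kappa_0 \;=: \; - \kappa,
\]
where the upper bound $\vol(M, g_{\e,p,k}) \leq A k + B$ is supplied by Lemma \ref{lem:M-vol-estimate}. Since $k$ is fixed, $\kappa$ depends only on $k$, $p$, and the ambient data, not on $\e$. The only subtle point in the whole argument is the control at the gluing region between $H_1$ and $H_2$; this is handled entirely by the construction, which confines all $\e$-dependence to the interior of the tubular disk.
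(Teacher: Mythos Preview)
Your proposal is correct and follows precisely the argument the paper sketches in the paragraph immediately preceding the lemma: bound $K_{g_{\e,p,k}}$ below on $H_1$ via the product structure and Lemma~\ref{lem:properties-k-hyp-dumbbells}, on $H_2$ via the fixed metric $g_{p,M}$, and then pass to $\overline{g}_{\e,p,k}$ using the scaling relation $K_{\overline{g}_{\e,p,k}} = \vol(M,g_{\e,p,k})^{2/m} K_{g_{\e,p,k}}$ together with the volume bound of Lemma~\ref{lem:M-vol-estimate}. Your write-up is in fact more detailed than the paper's, which states the lemma without a separate proof.
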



\section{The proof of Theorem \ref{thm:main}}

 We give the proof of Theorem \ref{thm:main} by using the 
 min-max principle for the Hodge-Laplacian acting on co-exact forms.
 We denote by $\lambda^{\prime (p)}_k(M,g)$ and $\lambda^{\prime \prime (p)}_k(M,g)$
 the $k$-th eigenvalues of the Hodge-Laplacian acting on exact and 
 co-exact forms, respectively, which are always positive.
 Theorem \ref{thm:main} is a corollary of Lemma \ref{lem:k-ev-estimate}.

\begin{lem}[Small eigenvalues] \label{lem:k-ev-estimate}
 Let $p$ be an integer with $0 \le p \le m-2$.
 For any integer $k \ge 1$ and any real number $\e >0$, there exists
 a positive constant $C(m,p,\overline{k}) >0$ independent of $\e$ such that 
\begin{equation*}
\begin{split}
  \lambda^{\prime \prime (p)}_k (M, \overline{g}_{\e,p,\overline{k}})
       &\le  \dfrac{C(m,p,\overline{k})}{| \log \e|^2},
\end{split}
\end{equation*}
 where $\overline{k} := k + b_p(M)$, where $b_p(M)$ is the $p$-th 
 Betti number of $M$.
\end{lem}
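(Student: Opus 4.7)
The plan is to bound $\lambda^{\prime\prime(p)}_k$ via the min-max principle for the co-exact spectrum, using test $p$-forms built from the product structure of the tube $H_1 \cong \mathbb{S}^p \times \mathbb{D}^{m-p}_\e$ around the embedded sphere. The starting observation is that for any smooth function $\psi$ on $Y := \mathbb{D}^{m-p}_\e$, the $p$-form $\omega := \psi \cdot \pi^* \vol_{\mathbb{S}^p}$ is automatically \emph{co-closed}: one has $*\omega = \pi_Y^*(\psi\,\vol_Y)$, so $d(*\omega) = \pi_Y^*(d\psi \wedge \vol_Y) = 0$ for degree reasons on the $(m-p)$-dimensional second factor, and hence $\delta\omega = 0$.

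I would then introduce $\overline{k}+1$ smooth plateau functions $\psi_1,\dots,\psi_{\overline{k}+1}$ on $Y$, one for each lobe-type region of the chain (the $\overline{k}$ dumbbell bumps together with the original disk $\mathbb{D}^{m-p}$ glued at the end). Each $\psi_j$ equals $1$ on its lobe $B_j$, vanishes on the other lobes, and is affine in the axial coordinate $r$ through each adjacent cylinder of length $2L = 2|\log\e|$. The $p$-forms $\omega_j := \psi_j\, \pi^*\vol_{\mathbb{S}^p}$ are linearly independent (they take distinct values on different lobes), co-closed by the previous step, and span a subspace $V_0 \subset \Omega^p(M)$ of dimension $\overline{k}+1$.

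The key Rayleigh-quotient estimate is local to each cylinder $C_i$. For $\omega = \sum_j a_j \omega_j$, only $\psi_i$ and $\psi_{i+1}$ vary on $C_i$, which gives $d\omega|_{C_i} = \frac{a_{i+1}-a_i}{2L}\, dr \wedge \pi^*\vol_{\mathbb{S}^p}$. Integrating against the warped volume element and using the uniform bound $\e^{m-p-1}\int_{-L}^{L}\cosh^{m-p-1}(r)\,dr \le \frac{2}{m-p-1}$ from \eqref{eq:int-cosh^n} yields $\|d\omega\|^2 \le \frac{C_1}{L^2}\sum_i (a_{i+1}-a_i)^2$. Meanwhile the lobes contribute $\|\omega\|^2 \ge D\sum_j a_j^2$ with $D := \vol(\mathbb{S}^p)\min_j \vol(B_j) > 0$ independent of $\e$. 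Combining with the elementary bound $\sum_i (a_{i+1}-a_i)^2 \le 4\sum_j a_j^2$ and absorbing the volume-normalization factor $\vol(M,g_{\e,p,\overline{k}})^{2/m}$ via Lemma \ref{lem:M-vol-estimate}, one obtains a uniform bound $R(\omega) := \|d\omega\|^2/\|\omega\|^2 \le C(m,p,\overline{k})/|\log\e|^2$ on all of $V_0$.

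To finish, I would set $V := V_0 \cap (\mathcal{H}^p)^\perp$, which has dimension at least $(\overline{k}+1) - b_p(M) = k+1$. Every $\omega \in V$ is both co-closed and orthogonal to the harmonic forms, hence co-exact by the Hodge decomposition. The min-max principle for the co-exact spectrum then yields $\lambda^{\prime\prime(p)}_k(M,\overline{g}_{\e,p,\overline{k}}) \le \sup_{\omega \in V} R(\omega) \le C(m,p,\overline{k})/|\log\e|^2$. \emph{The main obstacle} I anticipate is precisely the passage from merely co-closed test forms to genuinely co-exact ones: projecting onto $(\mathcal{H}^p)^\perp$ can remove up to $b_p(M)$ dimensions from $V_0$, which is exactly why the construction employs $\overline{k} = k + b_p(M)$ copies of the dumbbell rather than just $k$.
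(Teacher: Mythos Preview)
Your strategy coincides with the paper's: both build co-closed test $p$-forms of the shape $\chi\cdot\pi^*\vol_{\mathbb{S}^p}$ on the product tube $H_1$, and both use $\overline{k}=k+b_p(M)$ bumps so that after accounting for the $b_p(M)$ harmonic directions one still controls $\lambda^{\prime\prime(p)}_k$. The paper differs only in packaging: it chooses cut-offs $\chi_1,\dots,\chi_{\overline{k}}$ with \emph{pairwise disjoint} supports (each $\chi_i$ ramps up and down over half-cylinders of length $L$), so that for any linear combination $\sum a_i\varphi_i$ one has $R(\sum a_i\varphi_i)\le\max_i R(\varphi_i)$ directly, without your discrete-Dirichlet inequality $\sum(a_{i+1}-a_i)^2\le 4\sum a_j^2$; and it phrases the harmonic correction by applying min-max on the full co-closed subspace (whose first $b_p(M)$ eigenvalues vanish) rather than by your explicit intersection with $(\mathcal{H}^p)^\perp$. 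Both routes give the same bound.

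There is, however, one genuine slip. Your last plateau function $\psi_{\overline{k}+1}$ equals $1$ on the original disk $\mathbb{D}^{m-p}$, hence equals $1$ on $\partial H_1=\mathbb{S}^p\times\partial\mathbb{D}^{m-p}$. Thus $\omega_{\overline{k}+1}=\pi^*\vol_{\mathbb{S}^p}$ along $\partial H_1$, and this form does \emph{not} extend to a smooth co-closed $p$-form on $H_2$: there is no product structure outside the tube, so any extension destroys both $\delta\omega=0$ and your Rayleigh control. As written, $V_0$ is therefore not a subspace of $\Omega^p(M)$ and the min-max step is illegitimate. The repair is immediate: simply drop $\psi_{\overline{k}+1}$ and keep only the $\overline{k}$ forms $\omega_1,\dots,\omega_{\overline{k}}$, each compactly supported in the interior of $H_1$ and hence extending by zero to $M$. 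Then $\dim V_0=\overline{k}$ and $\dim\bigl(V_0\cap(\mathcal{H}^p)^\perp\bigr)\ge k$, which is exactly what the min-max for $\lambda^{\prime\prime(p)}_k$ needs. This is why the paper uses precisely $\overline{k}$ cut-offs, all vanishing near $\partial H_1$.
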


\begin{rem} \label{rem:duality}
 In the case of $p=m-1, m$, by the Hodge duality 
 $\lambda^{\prime \prime (p)}_k = \lambda^{\prime (m-p)}_k$, 
 we can reduce to the case of $p=1,0$, respectively.
 Therefore, the same statement as Lemma \ref{lem:k-ev-estimate} 
 for exact $p$-forms still holds in the case of $p=m-1,m$.
\end{rem}

\begin{proof}
 To prove the estimate in Lemma \ref{lem:k-ev-estimate}, 
 we use the min-max principle for the Hodge-Laplacian acting on 
 co-exact $p$-forms. 
 Since the space of co-closed $p$-forms modulo co-exact $p$-forms 
 is that of harmonic $p$-forms, whose dimension is $b_p(M)$, 
 we may construct $\overline{k} = k + b_p(M)$ test co-closed $p$-forms 
 $\varphi_i$ on $M$.

 Let $\chi_i$ be the linear cut-off functions on $C_{\overline{k},\e}$ 
 as follows (See Figure $4$):
 For $\chi_1$, we define $\chi_1$ as 
\begin{equation*} \label{eq:linear-cut-off-func-1}
\begin{split}
  \chi_1  &:=
 \begin{cases}
    \quad  1  &  \text{for } r \le - L, \\[0.2cm]
     - \dfrac{r}{L} & \text{for } - L \le r \le 0, \\[0.2cm]
    \quad  0  &   \text{for } 0 \le r. 
 \end{cases}
\end{split}
\end{equation*}
 For $\chi_i$ $(i=2,3,\dots,\overline{k})$, we define $\chi_i$ periodically as 
\begin{equation*} \label{eq:linear-cut-off-func-i}
\begin{split}
  \chi_i  &:=
 \begin{cases}
  \quad  0  \quad   \text{for } r \le (2i-4)L + (i-2) \tfrac{2}{3} \pi, \\[0.2cm]
     \  \dfrac{1}{L} \big( r - (2i-4)L - (i-2) \tfrac{2}{3} \pi \big)   \\
  \qquad  \ \, \text{for } (2i-4)L + (i-2) \tfrac{2}{3} \pi \le r 
       \le (2i-3)L + (i-2) \tfrac{2}{3} \pi, \\
  \quad  1  \quad  \text{for } (2i-3)L + (i-2) \tfrac{2}{3} \pi \le r 
       \le (2i-3)L + (i-1) \tfrac{2}{3} \pi, \\[0.2cm]
   - \dfrac{1}{L} \big( r - (2i-2)L - (i-1) \tfrac{2}{3} \pi \big)  \\
  \qquad \ \, \text{for } (2i-3)L + (i-1) \tfrac{2}{3} \pi \le r 
       \le  (2i-2)L + (i-1) \tfrac{2}{3} \pi, \\[0.2cm]
   \quad  0  \quad  \text{for } (2i-2)L + (i-1) \tfrac{2}{3} \pi \le r.
 \end{cases}
\end{split}
\end{equation*}
 From the construction, the interiors of the supports $\supp(\chi_i)^{\circ}$
 are mutually disjoint: 
\begin{equation*} 
\begin{split}
  \supp(\chi_i)^{\circ} \cap \supp(\chi_j)^{\circ} = \emptyset  \quad (i \neq j).
\end{split}
\end{equation*}

\vspace{-0.5cm}

\begin{figure}[H] \label{fig:cut-off-funcs}
\begin{center}
\begin{tikzpicture}[scale=0.4]
  \draw (0,-0.1) arc [radius=0.1, start angle=-90, end angle=90];
  \draw[dotted] (0,0.1) arc [radius=0.1, start angle=90, end angle=270];
  \draw [domain=-3:3] plot (\x,{(cosh(\x))/10});
  \draw [domain=-3:3] plot (\x,{- (cosh(\x))/10});
  \draw[<->] (-3.0,-1.5) -- (3.0,-1.5);
  \draw (0,-1.4) -- (0,-1.6); 
  \draw (0,-1.5)  node[below] {$0$};  
  \draw (3.0,-1.5)  node[below] {$L$}; 
  \draw (-3.0,-1.5) node[below] {$-L$}; 
  \draw[->] (0,1.5) -- (0,0.3);
  \draw[->] (0,-1.2) -- (0,-0.3);
  \draw (0.6,1.0) node {$\e$};
  \draw (-3,1) arc [radius=sqrt(2), start angle=45, end angle=315];
  \draw (-3,-1) arc [radius=sqrt(2), start angle=-45, end angle=45];
  \draw[dotted] (-3,1) arc [radius=sqrt(2), start angle=135, end angle=225];
  \draw (-4,0) node {$B_1$};
  \draw (3,-1) arc [radius=sqrt(2), start angle=-135, end angle=135];
  \draw (3,-1) arc [radius=sqrt(2), start angle=-45, end angle=45];
  \draw[dotted] (3,1) arc [radius=sqrt(2), start angle=135, end angle=225];
  \draw[dotted] (5,1) arc [radius=sqrt(2), start angle=135, end angle=225];
  \draw (4,0) node {$B_2$};
  \draw [domain=-3:3] plot ({\x +8},{(cosh(\x))/10});
  \draw [domain=-3:3] plot ({\x +8},{- (cosh(\x))/10});
  \draw (8,-0.1) arc [radius=0.1, start angle=-90, end angle=90];
  \draw[dotted] (8,0.1) arc [radius=0.1, start angle=90, end angle=270];
  \draw (11,-1) arc [radius=sqrt(2), start angle=-135, end angle=135];
  \draw (11,-1) arc [radius=sqrt(2), start angle=-45, end angle=45];
  \draw[dotted] (11,1) arc [radius=sqrt(2), start angle=135, end angle=225];
  \draw[dotted] (13,1) arc [radius=sqrt(2), start angle=135, end angle=225];
  \draw (12,0) node {$B_3$};
  \draw [domain=-3:-1] plot ({\x +16},{(cosh(\x))/10});
  \draw [domain=-3:-1] plot ({\x +16},{- (cosh(\x))/10});
  \draw[thick, dotted] (15.2,0) -- (16.2,0);
  \draw [domain=1:3] plot ({\x +15.5},{(cosh(\x))/10});
  \draw [domain=1:3] plot ({\x +15.5},{- (cosh(\x))/10});
  \draw (18.5,-1) arc [radius=sqrt(2), start angle=-135, end angle=135];
  \draw (18.5,-1) arc [radius=sqrt(2), start angle=-45, end angle=45];
  \draw[dotted] (18.5,1) arc [radius=sqrt(2), start angle=135, end angle=225];
  \draw[dotted] (20.5,1) arc [radius=sqrt(2), start angle=135, end angle=225];
  \draw (19.5,0) node {$B_{\overline{k}}$};
  \draw [domain=-3:3.2] plot ({\x +23.5},{(cosh(\x))/10});
  \draw [domain=-3:3.2] plot ({\x +23.5},{- (cosh(\x))/10});
  \draw (23.5,-0.1) arc [radius=0.1, start angle=-90, end angle=90];
  \draw[dotted] (23.5,0.1) arc [radius=0.1, start angle=90, end angle=270];
  \draw (26.7,-1.2) arc (-55:55:0.5cm and 1.5cm);
  \draw[dotted] (26.7,1.2) arc (130:220:0.5cm and 1.5cm);
  \draw (26.2,-0.7) arc (-165:160:0.8cm and 2.5cm);
  \draw[dotted] (26.3,0.8) arc (150:190:0.8cm and 2.5cm);
  \draw (27.5,3.2) node {${\Bbb D}^{m-p}$};
%
  \draw (-5.5,-7.0) -- (27.5,-7.0);
  \draw (-5.5,-5) -- (-3,-5);
  \draw (-5.5,-5) -- (-3,-5);
  \draw (-3,-5) -- (0,-7);
  \draw[thick, dotted] (-3,-5) -- (-3,-7);
  \draw (-4,-4) node {$\chi_1$};
  \draw (-3,-7) node[below] {{\footnotesize $-L$}};
  \draw (0,-7)  node[below] {$0$};
  \draw (0,-7) -- (3,-5);
  \draw (3,-5) -- (5,-5);
  \draw (5,-5) -- (8,-7);
  \draw[thick, dotted] (3,-5) -- (3,-7);
  \draw[thick, dotted] (5,-5) -- (5,-7);
  \draw (4,-4) node {$\chi_2$};
  \draw (3,-7) node[below] {{\footnotesize $L$}};
  \draw (5,-7) node[below] {{\footnotesize $L+ \tfrac{2}{3} \pi$}};
  \draw (8,-7) -- (11,-5);
  \draw (11,-5) -- (13,-5);
  \draw (13,-5) -- (16,-7);
  \draw[thick, dotted] (11,-5) -- (11,-7);
  \draw[thick, dotted] (13,-5) -- (13,-7);
  \draw (12,-4) node {$\chi_3$};
  \draw (8.5,-7) node[below ] {{\footnotesize $2L+ \tfrac{2}{3} \pi$}};
  \draw (13,-7) node[below] {{\footnotesize $3L+ \tfrac{4}{3} \pi$}};
  \draw (16.5,-7) node[below] {{\footnotesize $4L+ \tfrac{4}{3} \pi$}};
\end{tikzpicture}
 \vspace{-0.5cm}
\end{center}
   \caption{the cut-off functions $\chi_i$ on $C_{\overline{k},\e}$}
\end{figure}

\vspace{-0.2cm}

 Then, we take the test $p$-forms $\vphi_i$ as follows:  \\[-0.6cm]
\begin{equation} \label{eq:test-p-form}
\begin{split}
 \vphi_i  &:=
 \begin{cases}
    \chi_i(r) \, v_{{\Bbb S}^p} 
       &  \text{on } H_1 = {\Bbb S}^{p} \times {\Bbb D}^{m-p}, \\
    \quad  0   
       &   \text{on } H_2 = \overline{M \setminus H_1},
 \end{cases}
\end{split}
\end{equation}
  where $v_{{\Bbb S}^p}$ is the volume form of ${\Bbb S}^p$.

 These $\vphi_i$ are co-closed $p$-form on $(M,g_{\e,p,\overline{k}}).$
 In fact, since the metric is product on $H_1$, we have on $H_1$
\begin{equation*} 
\begin{split}
  \delta_{g_{\e,p,\overline{k}}} \vphi_i 
  &= \delta_{g_{\e,p,\overline{k}}} \big( \chi_i \, v_{{\Bbb S}^p} \big) 
   = \big( \delta_{g_{\e,p,\overline{k}}} \chi_i \big) \, v_{{\Bbb S}^p}
      + \chi_i \, \big( \delta_{g_{\e,p,\overline{k}}} v_{{\Bbb S}^p} \big) \\
  &= 0 +  \chi_i \, \big( \delta_{g_{{\Bbb S}^p}} v_{{\Bbb S}^p} \big)
   \equiv 0.
\end{split}
\end{equation*}

\vspace{0.2cm}

 Since the supports of the family $\{ \vphi_i \}^{\overline{k}}_{i=1}$ are disjoint 
 up to measure $0$, the min-max principle for the Hodge-Laplacian on co-exact forms 
 gives us 
\begin{equation} \label{eq:co-exact-min-max}
\begin{split}
 \lambda^{\prime \prime (p)}_k (M, g_{\e,p,\overline{k}})
  &\le  \max_{i=1,2,\dots,\overline{k}} \left\{ 
     \dfrac{ \| d \vphi_i \|^2_{L^2 (M,g_{\e,p,\overline{k}})}  
      }{ \| \vphi_i \|^2_{L^2(M, g_{\e,p,\overline{k}})} } \right\}  \\[0.2cm]
  &= \max_{i=1,2,\dots,\overline{k}} \left\{ 
     \dfrac{ \| d \vphi_i \|^2_{L^2 (H_1, g_{\e,p,\overline{k}})}  
      }{ \| \vphi_i \|^2_{L^2(H_1, g_{\e,p,\overline{k}})} } \right\},
\end{split}
\end{equation}
 because of $\vphi_i \equiv 0$ on $H_2$.

 First, we estimate the numerator of \eqref{eq:co-exact-min-max} from above.
 Since 
\begin{equation} \label{eq:d-test-p-form}
\begin{split}
  d \vphi_i &= d \big( \chi_i(r) \, v_{{\Bbb S}^p}  \big)
   = \chi^{\prime}_i(r) dr \wedge v_{{\Bbb S}^p}  \quad  \text{ on } H_1
\end{split}
\end{equation}
 and $|\chi^{\prime}_1(r)|^2 = \dfrac{1}{L^2}$ on $[-L,0]$, 
 we have 
\begin{equation*}
\begin{split}
  \| d \vphi_1 \|^2_{ L^2 (H_1, g_{\e,p,\overline{k}}) }
   &= \dint_{ {\Bbb S}^p } \dint_{C_{\e,1}} 
       \big| \chi^{\prime}_1(r) dr \wedge v_{{\Bbb S}^p} \big|^2_{g_{\e,p,\overline{k}}} \,
          (\e \cosh(r))^{m-p-1} \, dr d \mu_{{\Bbb S}^{m-p-1}} d \mu_{{\Bbb S}^p} \\
   &= {\e}^{m-p-1} \, \dint^{0}_{-L} | \chi^{\prime}_1(r)|^2 \cosh^{m-p-1}(r) dr \,
       \dint_{ {\Bbb S}^{m-p-1} } d \mu_{{\Bbb S}^{m-p-1}} \,  
       \dint_{ {\Bbb S}^p } d \mu_{{\Bbb S}^p}  \\
   &= {\e}^{m-p-1} \, \dint^{0}_{-L} \dfrac{1}{L^2} \cosh^{m-p-1}(r) dr \cdot
      \vol ({\Bbb S}^{m-p-1}) \vol({\Bbb S}^{p})  \\
   &= \dfrac{{\e}^{m-p-1}}{L^2} \, \dint^{L}_{0} \cosh^{m-p-1}(r) dr \cdot
        \vol ({\Bbb S}^{m-p-1}) \vol({\Bbb S}^{p}) \\
   &\le  \dfrac{{\e}^{m-p-1}}{L^2} \cdot \dfrac{{\e}^{-(m-p-1)}}{m-p-1 } \cdot  
       \vol ({\Bbb S}^{m-p-1}) \vol({\Bbb S}^{p}) \quad 
        (\text{by } \eqref{eq:int-cosh^n})  \\
   &=  \dfrac{1}{(m-p-1) |\log \e|^2} \, \vol ({\Bbb S}^{m-p-1}) \vol({\Bbb S}^{p}),
\end{split}
\end{equation*}
 where $L = |\log \e|$ and $m-p-1 \ge 1$.

 Next, we estimate the denominator of \eqref{eq:co-exact-min-max} from below.
\begin{equation*}
\begin{split}
 \| \vphi_1 \|^2_{L^2(H_1, g_{\e,p,\overline{k}})} 
   &\ge  \dint_{ {\Bbb S}^p } \dint_{C_{\e,1}} 
      | \chi_1(r)|^2  \cdot \big| v_{{\Bbb S}^p} \big|^2_{g_{\e,p,\overline{k}}}
        \, d \mu_{g_{{\Bbb S}^p}} \, d \mu_{g_{C_{\e,1}}}  \\
   &\ge  \vol({\Bbb S}^{p}) \left\{ \dint_{B_1} d \mu_{g_{B_1}} + 
           \dint^{0}_{-L} |\chi_1(r)|^2 (\e \cosh(r))^{m-p-1} \, dr
           \vol({\Bbb S}^{m-p-1}) \right\} \\
   &\ge  \vol({\Bbb S}^{p}) \,
           \dint^{\tfrac{3}{4} \pi}_{\tfrac{\pi}{4}} \sin^{m-p-1}(r) dr 
           \vol({\Bbb S}^{m-p-1})  \\
   &\ge  \dfrac{1}{2^{(m-p-1)/2}} \cdot \dfrac{\pi}{2} \cdot
            \vol({\Bbb S}^{p}) \vol({\Bbb S}^{m-p-1}).
\end{split}
\end{equation*}

 Hence, for $\vphi_1$, we obtain an upper bound of the Rayleigh-Ritz quotient:
\begin{equation} \label{eq:Rayleigh-1}
\begin{split}
 \dfrac{ \| d \vphi_1 \|^2_{L^2 (H_1, g_{\e,p,\overline{k}})}  
      }{ \| \vphi_1 \|^2_{L^2(H_1, g_{\e,p,\overline{k}})} }
   &\le  \dfrac{C(m,p)}{|\log \e|^2},
\end{split}
\end{equation}
 where $C(m,p)$ is a positive constant depending only on $m,p$.

 In the same way, we can obtain similar upper bounds of the Rayleigh-Ritz 
 quotient for $\vphi_i$ ($i=2,3, \dots, \overline{k}$):
\begin{equation} \label{eq:Rayleigh-i}
\begin{split}
 \dfrac{ \| d \vphi_i \|^2_{L^2 (H_1, g_{\e,p,\overline{k}})}  
      }{ \| \vphi_i \|^2_{L^2(H_1, g_{\e,p,\overline{k}})} }
   &\le  \dfrac{C(m,p)}{|\log \e|^2},
\end{split}
\end{equation}
 where $C(m,p)$ is a positive constant depending only on $m,p$.

 Thus, we obtain 
\begin{equation} \label{eq:unnormalize-ev-estimate}
\begin{split}
 \lambda^{\prime \prime (p)}_k (M, g_{\e,p,\overline{k}})
   &\le \max_{i=1,\dots,\overline{k}} 
      \dfrac{ \| d \vphi_i \|^2_{L^2(M,g_{\e,p,\overline{k}})} }{
           \| \vphi_i \|^2_{L^2(M,g_{\e,p,\overline{k}})} } \\
   &\le \dfrac{C(m,p)}{| \log \e|^2} \longrightarrow 0
     \quad (\e \longrightarrow 0).
\end{split}
\end{equation}

 After the normalization of $g_{\e,p,\overline{k}}$, 
 from Lemma \ref{lem:basic} $(3)$, Lemma \ref{lem:M-vol-estimate}
 and \eqref{eq:unnormalize-ev-estimate}, it follows that 
\begin{equation*}
\begin{split}
 \lambda^{\prime \prime (p)}_k (M, \overline{g}_{\e,p,\overline{k}})
   &= \vol(M, g_{\e,p,\overline{k}})^{\frac{2}{m}} \cdot 
        \lambda^{(p)}_k (M, g_{\e,p,\overline{k}}) \\        
   &\le (A \overline{k} + B) ^{\frac{2}{m}} \cdot 
      \max_{i=1,\dots,\overline{k}} 
      \dfrac{ \| d \vphi_i \|^2_{L^2(M,\overline{g}_{\e,p,\overline{k}})} }{
           \| \vphi_i \|^2_{L^2(M,\overline{g}_{\e,p,\overline{k}})} } \\
   &\le  (A \overline{k} + B) ^{\frac{2}{m}} \, \dfrac{C(m,p)}{| \log \e|^2} 
     \longrightarrow 0  \quad (\e \longrightarrow 0),
\end{split}
\end{equation*}
 where $A, B >0$ are some constants independent of $\e$ and $\overline{k}$.

 This completes all the proofs.
\end{proof}

\vspace{0.2cm}

\begin{rem} \label{rem:small-ev}
 \renewcommand{\labelenumi}{$(\roman{enumi})$}
  From the proof of Lemma $\ref{lem:k-ev-estimate}$, we find the following:
\begin{enumerate}
 \item  One hyperbolic dumbbell yields one small eigenvalue.
  Thus, by gluing $\overline{k} = k+ b_p(M)$ hyperbolic dumbbells to embedded 
  spheres separately, we obtain another family of Riemannian metrics on $M$ 
  satisfying our desired properties.
 \item  For the rough Laplacian $\overline{\Delta} = \nabla^* \nabla$ acting on
  $p$-forms and tensor fields of any type, the same statement also holds.
  In fact, in the case of $p$-forms, for the test $p$-form in \eqref{eq:test-p-form},
  since $v_{{\Bbb S}^p}$ is parallel, 
  the same equality as in \eqref{eq:d-test-p-form} also holds:
\begin{equation*}
\begin{split}
  \nabla \vphi_i &= \nabla \big( \chi_i(r) \, v_{{\Bbb S}^p}  \big)
   = \chi^{\prime}_i(r) dr \otimes v_{{\Bbb S}^p}  \quad  \text{ on } H_1.
\end{split}
\end{equation*}
  In the case of $(a,b)$-tensor fields, 
  if we replace $v_{{\Bbb S}^p}$ by 
\begin{equation*}
\begin{split}
  \underbrace{ \dfrac{\partial}{\partial r} \otimes \dots \otimes 
    \dfrac{\partial}{\partial r} }_{\text{$a$-times}}
    \otimes \underbrace{dr \otimes \dots \otimes dr}_{\text{$b$-times}}
\end{split}
\end{equation*}
  for the test tensor fields like \eqref{eq:test-p-form}, 
  the same equality still holds.

 Therefore, by the same argument, there exist small $\overline{k} = k+ b_p(M)$ 
 eigenvalues of the rough Laplacian $\overline{\Delta}$ acting on $p$-forms 
 and tensor fields of any type on $(M, \overline{g}_{\e,p,\overline{k}})$.
\end{enumerate}
\end{rem}


 \section{Remarks and Further Studies}

 We discuss the future developments of the problem to find a positive lower bound 
 for the first positive eigenvalue of the Hodge-Laplacian on $p$-forms 
 in terms of geometric quantities (see also \cite{Honda-Mondino[24]}).
 It is well-known that the first positive eigenvalue of the Laplacian 
 on functions can be estimated below in terms of the dimension, 
 a lower bound of the Ricci curvature and an upper bound of the diameter 
 (\cite{Gromov[80]}, \cite{Li-Yau[80]}).
 In the case of $1 \le p \le m-1$, however, similar estimates do not hold any longer. 
 Typical counterexamples are the Berger spheres collapsing to the complex projective 
 spaces (see \cite{Colbois-Courtois[90]}). 
 In particular, their volumes converge to zero.

 Theorem \ref{thm:main} (or Theorem \ref{thm:uniform-main}) implies that 
 the first positive eigenvalue of the Hodge-Laplacian acting on $p$-forms cannot 
 be estimated below in terms of the dimension, the volume and a lower bound of 
 the sectional curvature. 
 From the proof, the diameter for the family of Riemannian metrics diverges 
 to infinity.
 It is a natural question to ask the case where the diameter is bounded in addition. 
 Colbois and Courtois \cite[Theorem 0.4]{Colbois-Courtois[90]} proved the following theorem:

\begin{thm}[Colbois and Courtois \cite{Colbois-Courtois[90]}] \label{thm:Colbois-Courtois[90]}
 For given $m \in \N$, $\kappa, v, D >0$, 
 there exists a positive constant $C(m, \kappa,v,D) >0$ 
 depending only on $m,\kappa,v$ and $D$ such that 
 any connected oriented closed Riemannian manifold $(M^m,g)$ of dimension $m$ with 
 $| K_g | \le \kappa$, $\vol(M,g) \ge v$ and $\diam (M,g) \le D$ 
 satisfies 
\begin{equation*} 
\begin{split}
  \lambda^{(p)}_1(M,g) &\ge C(m, \kappa,v,D) > 0
\end{split}
\end{equation*}
 for all $p=0,1,\dots,m$.
\end{thm}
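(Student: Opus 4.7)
The approach is compactness-and-contradiction, following the Cheeger--Gromov philosophy. Suppose the theorem fails for some fixed $m, \kappa, v, D$; then there exists a sequence of Riemannian manifolds $(M_i, g_i)$ in the class $\mcM(m, \kappa, v, D)$ with $\lambda^{(p)}_1(M_i, g_i) \to 0$ for some degree $p$. I want to extract a limit in a good enough topology and derive a topological contradiction via Hodge theory.

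First I would invoke Cheeger's injectivity radius estimate: from $|K_{g_i}| \le \kappa$, $\vol(M_i, g_i) \ge v$ and $\diam(M_i, g_i) \le D$, one obtains a uniform lower bound $\Inj(M_i, g_i) \ge i_0 > 0$. Combining this with the Cheeger--Gromov precompactness theorem, the class of closed Riemannian manifolds satisfying $|K| \le \kappa$, $\Inj \ge i_0$, $\diam \le D$ is precompact in the $C^{1,\alpha}$ topology for any $\alpha \in (0,1)$, and by Cheeger's finiteness theorem only finitely many diffeomorphism types appear. After passing to a subsequence I may therefore assume all $M_i$ are diffeomorphic to a single closed manifold $M_\infty$ and that there exist diffeomorphisms $\Phi_i : M_\infty \to M_i$ with $\Phi_i^* g_i \to g_\infty$ in the $C^{1,\alpha}$ topology, where $g_\infty$ is a $C^{1,\alpha}$ Riemannian metric on $M_\infty$. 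In particular $b_p(M_i) = b_p(M_\infty) =: b$ for every $i$.

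Next I would establish continuity of the Hodge-Laplacian spectrum under this $C^{1,\alpha}$ convergence: for each fixed $k$, $\lambda^{(p)}_k(M_\infty, \Phi_i^* g_i) \longrightarrow \lambda^{(p)}_k(M_\infty, g_\infty)$ as $i \to \infty$. This follows from a two-sided min-max argument: test forms for $g_\infty$ can be transplanted to $\Phi_i^* g_i$ with Rayleigh--Ritz quotients that converge, using that $\Delta$, the Hodge star, and the $L^2$ inner product on $p$-forms depend continuously on $(g, \partial g)$, and conversely one needs a uniform $W^{1,2}$-compactness of co-exact eigenforms (via elliptic regularity adapted to $C^{0,\alpha}$ coefficients, or equivalently via Gauss-normal coordinates in which the metric is uniformly $C^{1,\alpha}$). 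Once this continuity is in hand, the hypothesis $\lambda^{(p)}_1(M_i, g_i) \to 0$ means $\lambda^{(p)}_{b+1}(M_i, g_i) \to 0$, so $(M_\infty, g_\infty)$ admits at least $b+1$ linearly independent $L^2$ harmonic $p$-forms; Hodge theory (valid for $C^{1,\alpha}$ metrics) forces $b_p(M_\infty) \ge b+1$, contradicting $b_p(M_\infty) = b$.

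The main obstacle is the continuity step. The Hodge-Laplacian involves Christoffel symbols of $g_i$, which under $C^{1,\alpha}$ convergence of metrics are only $C^{0,\alpha}$, so one cannot directly quote smooth perturbation theory. The careful part is to show that if $\varphi_i$ is an eigenform on $(M_\infty, \Phi_i^* g_i)$ with bounded eigenvalue and unit $L^2$-norm, then $\{\varphi_i\}$ is precompact in $L^2$ with a limit that is a genuine eigenform of $(M_\infty, g_\infty)$; this requires a uniform $W^{1,2}$-bound of the form $\|\nabla \varphi_i\|_{L^2} \le C(\lambda + 1)\|\varphi_i\|_{L^2}$ for a Bochner-type inequality whose constants are stable under the convergence, together with Rellich compactness. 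This is the analytic heart of the Colbois--Courtois argument, and once it is secured the topological contradiction at the end is essentially automatic.
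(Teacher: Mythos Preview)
Your proposal is correct and follows essentially the same approach that the paper describes. Note that the paper does not give its own proof of this theorem---it is a cited result from Colbois--Courtois \cite{Colbois-Courtois[90]}, and the paper only summarizes the method in one sentence: ``proved by contradiction, by means of the $C^{1,\alpha}$-precompactness theorem for $0<\alpha<1$ in the Lipschitz topology due to Peters \cite{Peters[87]}''. Your compactness-and-contradiction outline via $C^{1,\alpha}$ convergence, finiteness of diffeomorphism types, continuity of the spectrum, and the Hodge-theoretic contradiction is exactly this strategy, with the precompactness attributed to Cheeger--Gromov rather than Peters (the same circle of results).
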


 This theorem was proved by contradiction, by means of the $C^{1,\a}$-precompactness 
 theorem for $0< \a <1$ in the Lipschitz topology due to Peters \cite[Theorem 4.4]{Peters[87]}.
 So, this lower bound is implicit. 
 If the injectivity radius is bounded below away from zero in addition, 
 an explicit lower bound for $\lambda^{(p)}_1(M,g)$ is given by 
 \cite{Chanillo-Treves[97]}, \cite[Theorem 4.1]{Mantuano[08]}.

 If we weaken the curvature assumption of $| K_g | \le \kappa$ by $K_g \ge \kappa$, 
 then we do not know whether or not such a positive lower bound exists. 
 But, Lott \cite[p.918]{Lott[04]-quotient} conjectured the following:

\begin{conj}[Lott \cite{Lott[04]-quotient}] \label{conj:Lott}
 For given $m \in \N$, $\kappa \in \R$ and $v, D >0$,
 there would exist a positive constant $C(m, \kappa,v,D) >0$ 
 depending only on $m$, $\kappa$, $v$ and $D$ such that 
 any connected oriented closed Riemannian manifold $(M^m,g)$ of dimension $m$
 with $K_g \ge \kappa$, $\vol(M,g) \ge v$ and $\diam (M,g) \le D$ 
 satisfies 
\begin{equation*} 
\begin{split}
  \lambda^{(p)}_1(M,g) &\ge C(m, \kappa,v,D) > 0
\end{split}
\end{equation*}
 for all $p=0,1,\dots,m$.
\end{conj}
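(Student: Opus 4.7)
The plan is to argue by contradiction in the spirit of Colbois--Courtois \cite{Colbois-Courtois[90]} (see Theorem \ref{thm:Colbois-Courtois[90]}), replacing their use of $C^{1,\alpha}$-precompactness under two-sided sectional curvature bounds by the weaker Gromov--Hausdorff precompactness that is available under a lower curvature bound alone. Suppose the conclusion fails for some degree $p$: then there is a sequence $(M^m_n, g_n)$ with $K_{g_n} \ge \kappa$, $\vol(M_n, g_n) \ge v$, $\diam(M_n, g_n) \le D$, and $\lambda^{(p)}_1(M_n, g_n) \to 0$. Gromov's precompactness theorem delivers, after extraction, a Gromov--Hausdorff limit $(X, d)$; the non-collapsing hypothesis $\vol \ge v$ together with Perelman's stability theorem then guarantees that $X$ is an $m$-dimensional Alexandrov space of curvature $\ge \kappa$, with no dimension drop.

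The second step is to upgrade this geometric convergence to spectral convergence for the Hodge--Laplacian. In degree $p = 0$ (and $p = m$ by Hodge duality) this is by now well understood: the Cheeger--Colding theory and the subsequent RCD framework (Ambrosio--Gigli--Savar\'e, Honda, Gigli--Mondino--Savar\'e) give continuity of the scalar Laplacian spectrum under measured Gromov--Hausdorff convergence in the non-collapsed setting, so $\lambda^{(0)}_1(X) > 0$ because $X$ is connected with positive total measure, contradicting $\lambda^{(0)}_1(M_n, g_n) \to 0$. Therefore the substantive content of the conjecture lies in the range $1 \le p \le m-1$.

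For those intermediate degrees, the plan is to define a Hodge--Laplacian $\Delta_X^{(p)}$ on the limit Alexandrov space, for instance via the Friedrichs extension of the de Rham complex on the regular part $X^{\mathrm{reg}}$ (whose complement has Hausdorff codimension at least two by Burago--Gromov--Perelman), and then to establish two-sided spectral convergence $\lambda^{(p)}_k(M_n, g_n) \to \lambda^{(p)}_k(X)$. Upper semicontinuity should follow from a cut-off argument near the singular set, using that the $(m-1)$-dimensional Hausdorff measure of a small tubular neighborhood of the singularities tends to zero, so that test forms on $X$ can be lifted to near-optimal test forms on $M_n$. Lower semicontinuity is the harder half: it would require a Bochner--Weitzenb\"ock identity on $X^{\mathrm{reg}}$ together with a robust notion of parallel transport across the singular stratum (of the kind developed by Petrunin and Kapovitch--Mitsuishi for Alexandrov spaces), in order to prevent vanishing eigenvalues on the limit.

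The principal obstacle is exactly this spectral continuity for $p$-forms with $1 \le p \le m-1$ under non-collapsing Gromov--Hausdorff convergence with only a lower sectional curvature bound. The analogous continuity under two-sided bounds is classical, but here the analytic theory of differential forms on the singular limit is presently fragmentary: no robust intrinsic Hodge theory, heat kernel bound, or Weitzenb\"ock identity is available for $p \ge 1$ on Alexandrov or non-collapsed RCD spaces. Theorem \ref{thm:main} shows that any argument must genuinely exploit $\diam \le D$ to forbid the thin-tube mechanism that produces small eigenvalues there; this is encouraging, since that mechanism forces $\diam \to \infty$ and is therefore ruled out at the Alexandrov level, but converting this geometric intuition into a quantitative analytic lower bound is precisely the missing ingredient.
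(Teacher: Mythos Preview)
The statement you are attempting to prove is labeled in the paper as a \emph{Conjecture} (Conjecture~\ref{conj:Lott}), not a theorem, and the paper explicitly states: ``This conjecture is still open, as far as the authors know.'' There is therefore no proof in the paper to compare against. What the paper does offer is a \emph{conditional} argument: it observes that if Perelman's unpublished Lipschitz stability theorem for Alexandrov spaces were established, then the class $\mathcal{M}_K$ would be covered by finitely many Lipschitz balls, and the conjecture would follow (since bi-Lipschitz equivalence controls the Rayleigh quotients on forms uniformly). This is a different route from yours: the paper's sketch bypasses any intrinsic Hodge theory on the singular limit by transferring everything back to smooth comparison manifolds via bi-Lipschitz maps, whereas your plan requires building a $p$-form Laplacian on the Alexandrov limit $X$ and proving spectral continuity.

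To your credit, you identify the gap in your own approach clearly and accurately: the lower semicontinuity of $\lambda^{(p)}_k$ for $1 \le p \le m-1$ under non-collapsed Gromov--Hausdorff convergence with only $K \ge \kappa$ is not known, and the requisite Hodge theory on Alexandrov (or non-collapsed RCD) spaces is, as you say, fragmentary. So your proposal is not a proof but an honest outline of a strategy with a named open step. That is an appropriate response to an open conjecture, but you should be explicit that what you have written is a heuristic program, not a proof. Note also that the paper's conditional route via Lipschitz stability, if it could be completed, would be both shorter and more robust than yours, since it avoids form-level analysis on singular spaces entirely; conversely, your approach, if it could be made to work, would likely yield more (e.g.\ information about the limiting spectrum itself).
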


 This conjecture is still open, as far as the authors know.
 Recently, Honda and Mondino \cite{Honda-Mondino[25]} obtained 
 a positive lower bound of $\lambda^{(1)}_1(M,g)$ for $p=1$
 under $m \le 4$, the Ricci curvature $|\Ric_g | \le \kappa,$ $\vol(M,g) \ge v$ 
 and $\diam (M,g) \le D$.
 Their lower bound is also implicit with respect to $\kappa, v$ and $D$,
 since their proof is by contradiction, by means of the convergence theory 
 of Riemannian manifolds in dimension $4$ combined with 
 the convergence of the eigenvalues for $1$-forms \cite{Honda[17]}.
 Furthermore, they conjectured the following in the case of $p=1$:

\begin{conj}[Honda and Mondino \cite{Honda-Mondino[25]}] \label{conj:Honda-Mondino} 
 For given $m \in \N$, $\kappa \in \R$ and $v, D >0$, 
 there would exist a positive constant $C(m, \kappa,v,D) >0$ 
 depending only on $m,\kappa,v$ and $D$ such that 
 any connected oriented closed Riemannian manifold $(M^m,g)$ of dimension $m$ 
 with the Ricci curvature $\Ric_g \ge \kappa$, $\vol(M,g) \ge v$ and 
 $\diam (M,g) \le D$ satisfies 
\begin{equation*} 
\begin{split}
  \lambda^{(1)}_1(M,g) &\ge C(m, \kappa,v,D) > 0.
\end{split}
\end{equation*}
\end{conj}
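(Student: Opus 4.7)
The statement is a conjecture rather than a theorem, so the plan below is a strategic outline following the framework Honda and Mondino used to settle the $m \le 4$ case under the stronger two-sided Ricci bound. The natural approach is proof by contradiction via non-collapsed Ricci-limit theory. Suppose the conclusion fails. Then there exists a sequence $(M^m_j, g_j)$ satisfying $\Ric_{g_j} \ge \kappa$, $\vol(M_j, g_j) \ge v$, $\diam(M_j, g_j) \le D$ and $\lambda^{(1)}_1(M_j, g_j) \to 0$. By Gromov's precompactness theorem together with the volume non-collapsing hypothesis and the works of Cheeger--Colding and Ambrosio--Gigli--Savar\'e, a subsequence converges in the pointed measured Gromov--Hausdorff topology to a non-collapsed $\mathrm{RCD}(\kappa,m)$ space $(X, d_\infty, \mathcal{H}^m)$. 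The whole task is then to show that $\lambda^{(1)}_1$ is bounded below by a positive constant on every such limit space, which contradicts the assumed convergence to zero of the eigenvalues.

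The central ingredient is the stability of the $1$-form spectrum under non-collapsed Ricci-limit convergence, which is the content of Honda's continuity theorem for the Hodge-Laplacian on $1$-forms. Using that stability, the assumption $\lambda^{(1)}_1(M_j, g_j) \to 0$ yields, after $L^2$-normalization and passage to the limit, a non-trivial harmonic $1$-form $\omega$ on $X$ in the appropriate RCD sense, and the remaining goal is to exclude this $\omega$. Two complementary routes are available. The first is topological: a suitable notion of first Betti number for non-collapsed $\mathrm{RCD}(\kappa,m)$ spaces should control $\dim \ker \Delta^{(1)}$, so that a uniform bound on this Betti number in terms of $m, \kappa, v, D$ would suffice. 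The second is analytic: the Lichnerowicz--Weitzenb\"ock identity $\Delta \omega = \overline{\Delta}\omega + \Ric(\omega,\cdot)$, extended to the RCD setting, together with a Poincar\'e-type inequality for the rough Laplacian $\overline{\Delta}$ whose constant is controlled by $m, \kappa, v, D$, would force a quantitative lower bound on $\lambda^{(1)}_1$ that survives the passage to the limit.

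The hard part will be excluding the existence of the limiting harmonic $1$-form. Unlike the case $\Ric \ge \kappa > 0$, where Bochner's formula gives immediate rigidity, here $\kappa$ may be negative, and the singular set of $X$ can in principle support non-trivial $L^2$-harmonic $1$-forms. Theorem \ref{thm:main} of the present paper underscores the delicacy: without the diameter upper bound, genuine small eigenvalues are produced by $1$-forms of the shape $\chi'_i(r)\, dr \wedge v_{\mathbb{S}^0}$ concentrated on long, thin hyperbolic necks, so the entire force of the conjecture rests on the hypothesis $\diam \le D$ preventing any such neck from surviving into the Ricci-limit. Making this quantitative---showing that a Gromov--Hausdorff limit of uniformly bounded diameter and non-collapsed volume cannot carry a non-trivial $L^2$-harmonic $1$-form---requires a substantially finer understanding of the regular and singular stratification of non-collapsed $\mathrm{RCD}(\kappa,m)$ spaces than is currently available beyond dimension four, and this gap in the structure theory is precisely why the conjecture remains open.
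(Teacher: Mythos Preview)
The paper does not prove this statement: Conjecture~\ref{conj:Honda-Mondino} is presented explicitly as an open problem attributed to Honda and Mondino, with no accompanying proof or proof sketch. You correctly recognise this and offer a plausible strategic outline rather than a purported proof, so there is nothing in the paper to compare your approach against.

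Your outline is a reasonable summary of the contradiction-plus-compactness strategy that underlies the Honda--Mondino result for $m\le 4$ under two-sided Ricci bounds, and you are right that the essential obstacle is the lack of sufficiently fine structure theory for non-collapsed $\mathrm{RCD}(\kappa,m)$ spaces in higher dimensions. One small inaccuracy: when you invoke Theorem~\ref{thm:main} as cautionary evidence, the small eigenvalues relevant to $\lambda^{(1)}_1$ come from the case $p=0$ of Lemma~\ref{lem:k-ev-estimate}, where the test forms are the \emph{functions} $\chi_i$ (so $v_{\mathbb{S}^0}$ is trivial) and the associated exact $1$-forms are $\chi_i'(r)\,dr$; the wedge with $v_{\mathbb{S}^0}$ is superfluous. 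This does not affect your heuristic point that the diameter bound is what rules out long thin necks.
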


\vspace{0.2cm}

 For given $m \in \N$, $\kappa \in \R$ and $v, D >0$, 
 we denote by $\mathcal{M}_K$ be the class of all connected oriented closed 
 Riemannian manifolds $(M^m,g)$ of dimension $m$ with 
 $K_g \ge \kappa$, $\vol(M,g) \ge v$ and $\diam (M,g) \le D$.
 For Conjecture \ref{conj:Lott}, we note that any sequence in $\mathcal{M}_K$ 
 is non-collapsing, and that $\mathcal{M}_K$ has only finite homeomorphism types 
 \cite{Grove-Petersen-Wu[90]}.
 Perelman \cite{Perelman[91]}, \cite{Kapovitch[07]} proved the topological 
 stability theorem for Alexandrov spaces:
 Let $X$ be a compact $m$-dimensional Alexandrov space with the (sectional) curvature 
 bounded below by $\kappa \in \R$. 
 Then, there exists a positive constant $\e = \e(X, \kappa) >0$ such that 
 if any compact $m$-dimensional Alexandrov space $Y$ with the curvature 
 bounded below by $\kappa$ satisfies $d_{GH}(X,Y)< \e$, 
 then $X$ is homeomorphic to $Y$.
 Here, $d_{GH}$ denotes the Gromov-Hausdorff distance. 

 Furthermore, Perelman claimed that the Lipschitz stability theorem held true. 
 Here, the Lipschitz stability theorem means that ``homeomorphic" 
 in the topological stability theorem above can be chosen to be ``bi-Lipschitz". 
 However, it seems that the paper has not appeared anywhere 
 (see Kapovitch \cite[p.104]{Kapovitch[07]}). 
 If the Lipschitz stability theorem would hold true, 
 then Conjecture \ref{conj:Lott} also holds true, 
 since the class $\mathcal{M}_K$ with the Lipschitz distance is covered with 
 finitely many balls.

\vspace{0.3cm}
 In the case of the Ricci curvature bounded below, instead of the sectional curvature,
 it would be considered that a similar lower bound for $2 \le p \le m-2$ 
 does not hold any longer.

\begin{prob} \label{prob:Ricci-non-collapsing}
 Do there exist a closed manifold $M$ of dimension $m$ and a sequence of 
 Riemannian metrics $g_i$ on $M$ with $\Ric_{g_i} \ge  (m-1) \kappa$, 
 $\vol(M,g_i) \ge v >0$ and $\diam (M,g_i) \le D$ for uniform constants 
 $\kappa \in \R$, $v, D >0$ independent of $g_i$ such that 
 for all $2 \le p \le m-2$
\begin{equation*} 
\begin{split}
  \lambda^{(p)}_1(M,g_i) &\longrightarrow 0  \quad (i \longrightarrow \infty) \  ?
\end{split}
\end{equation*}
\end{prob}

 We conjecture that the answer to this problem would be positive, however,
 there exist no such examples ever. 
 To find a lower bound for the first positive eigenvalue of the Hodge-Laplacian 
 acting on $p$-forms with $2 \le p \le m-2$, we may need to control 
 the Weitzenb\"ock curvature tensor. 
 But, it seems to be impossible to control the Weitzenb\"ock curvature tensor 
 for $2 \le p \le m-2$, in terms of  $\Ric_{g} \ge - (m-1) \kappa^2$, 
 $\vol (M,g) \ge v$ and $\diam (M,g) \le D$.

 Furthermore, the finiteness theorem for the Ricci curvature version fails. 
 For given $m \in \N$, $\kappa \in \R$ and $v, D >0$, we denote by 
 $\mathcal{M}_{\Ric}$ the class of all connected oriented closed Riemannian manifolds
 $(M^m,g)$ of dimension $m$ with $\Ric_g \ge  (m-1) \kappa$, $\vol(M,g) \ge v$ and 
 $\diam (M,g) \le D$.
 Then, due to the result by Perelman \cite{Perelman[97]},
 there exists infinitely many homeomorphism types in $\mathcal{M}_{\Ric}$.
 In particular, the $p$-th Betti number of a closed Riemannian manifold 
 in $\mathcal{M}_{\Ric}$, which is equal to the dimension of the harmonic $p$-forms,
 cannot be estimated above in terms of $m, \kappa, v$ and $D$.
 This situation is quite different from the case of the sectional curvature 
 bounded below
 (cf.\ the estimate of the total Betti number by Gromov \cite{Gromov[81]}).
 For further comments and remarks, see the comment by Lott in \cite[Remark $4.42$]{Lott[18]}.

\vspace{0.3cm}
 In contrast, Lott in \cite{Lott[18]} gave an upper bound of 
 $\lambda^{(p)}_k(M,g)$ under $K_g \ge \kappa$, $\diam(M,g) = D$ 
 and $\vol(M,g) \ge v$.
 More generally, including collapsing cases, he also gave an upper bound of 
 $\lambda^{(p)}_k(M,g)$ for $0 \le p \le n$, in terms of $K_g \ge \kappa$ and 
 the length $\ell >0$ of an $(n,1/10)$-strained point with $1 \le n \le m$, 
 instead of the assumption $\vol(M,g) \ge v$.

\vspace{0.2cm}



 \vspace{0.5cm}
\noindent
 \ Colette Ann\'e \\
 \quad  Laboratoire de Math\'ematiques Jean Leray, \\
 \quad  Nantes Universit\'e, CNRS, Facult\'e des Sciences, \\
 \quad  BP 92208, 44322 Nantes, France  \\
 \quad  colette.anne@univ-nantes.fr

\vspace{0.5cm}
\noindent
 \ Junya Takahashi \\
 \quad  Research Center for Pure and Applied Mathematics, \\
 \quad  Graduate School of Information Sciences, \\
 \quad  T\^{o}hoku University, \\
 \quad  $6$--$3$--$09$ Aoba, Sendai $980$--$8579$, Japan \\
 \quad  t-junya@tohoku.ac.jp

\end{document}